\newtheorem{theorem}{Theorem}[section]
\newtheorem{lemma}{Lemma}[section]
\newtheorem{remark}{Remark}[section]
\newtheorem{corollary}{Corollary}[section]
\newtheorem{definition}{Definition}[section]
\newtheorem{example}{Example}[section]
\newproof{proof}{Proof}
\def\afrac#1#2{\ifinner {#1}/{#2} \else \frac{#1}{#2} \fi}
\def\1#1{\mbox{\rm{#1}}}
\begin{document}
\begin{frontmatter}

\title
{Orbits and Hamilton bonds in a family of plane triangulations with vertices of degree three or six}
\author{Jan~Florek}
\ead{jan.florek@ue.wroc.pl}

\address{Institute of Mathematics,
 University of Economics,\\  53--345 Wroc{\l}aw, ul. Komandorska 118/120,  Poland}

\begin{abstract}
Let $\cal{P}$ be the family of all $2$-connected plane triangulations with vertices of degree three or six. Gr\"{u}nbaum and Motzkin proved (in the dual terms) that every graph $P \in \cal{P}$ is factorable into factors $P_0$, $P_1$, $P_2$ (indexed by elements of the cyclic group $Q = \{0,1,2\}$) such that every factor $P_q$ consists of two induced paths with the same length $M(q)$, and $K(q)-1$ induced cycles with the same length $2M(q)$. For $q \in Q$, we define an integer $S^+(q)$  such that the vector $(K(q), M(q), S^+(q))$  determines the graph $P$ (if $P$ is simple) uniquely up to orientation-preserving isomorphism. We establish arithmetic equations that will allow calculate the vector $(K(q+1), M(q+1), S^+(q+1))$ by the vector $(K(q), M(q), S^+(q))$, $q \in Q$. We present some applications of the equations. The set $\{(K(q), M(q), S^+(q)): q \in Q\}$ is called the orbit of $P$. We characterize one point orbits of graphs in $\cal{P}$. We prove that if $P$ is of order $4n +2$, $n \in\mathbb{N}$, than it has a Hamilton bond such that the end-trees of the bond are equitable 2-colorable and have the same order. We prove that if $M(q)$ is odd and $K(q) \geqslant \frac{M(q)}{3}$, then there are two disjoint induced paths of the same order, which vertices together span all of $P$.

\end{abstract}
\end{frontmatter}

\section{Introduction}

Let $G_i$, i = 1, 2, be a plane graph with the vertex set $V(G_i)$, the edge set $E(G_i)$, and the face set $F(G_i)$. An isomorphism $\sigma$ between $G_1$ and $G_2$ is called \textsl{combinatorial} if it can be extended to a bijection
$$\sigma: {V(G_1)}\cup{E(G_1)}\cup{F(G_1)} \rightarrow {V(G_2)}\cup{E(G_2)}\cup{F(G_2)}$$
that preserves incidence not only of vertices with edges but also of vertices and edges with faces (Diestel \cite[p.~93]{flo2}). Furthermore, we say that $G_1$~and~$G_2$ are \textsl{op-equivalent} (equivalent up to orientation-preserving isomorphism) if $\sigma$ is a combinatorial isomorphism which preserves the counter clockwise orientation. (Formally: we require that $g_1,g_2,g_3$ are counter clockwise successive edges incident with a vertex $v$ if and only if $\sigma(g_1), \sigma(g_2), \sigma(g_3)$ are counter clockwise successive edges incident with $\sigma(v)$).

A \textsl{factor} of a graph is a subgraph whose vertex set is that of the whole graph. A graph $H$ is said to be \textsl{factorable} into factors $H_1$, $H_2$, \ldots, $H_t$ if these factors are pairwise edge-disjoint and
$E(H) = E(H_1) \cup {}\ldots {}\cup E(H_t)$
(see Chartrand-Lesniak \cite[p.~246]{flo1}). An edge (respectively a subgraph) of $H$ is said to be of \textsl{class} $q$ if this edge (respectively any edge of this subgraph) belongs to the factor $H_q$.

Let $\cal{P}$ be the family of all $2$-connected plane triangulations all of whose vertices are of degree $3$ or $6$, and suppose that $P\in\cal{P}$. Gr\"{u}nbaum and Motzkin  \cite[Lemma 2]{flo6} proved (in the dual terms) that the graph $P$ is factorable into factors $P_0$, $P_1$, $P_2$ (indexed by elements of the cyclic group $Q = \{0,1,2\}$) satisfying the following condition:
\begin{description}
\item{(*)} for every vertex $v$ in $P$, if two edges are counter clockwise successive edges incident with $v$, then  the first edge is of  class $q$ and  the second one is of class $q+1$, for some $q \in Q$.
\end{description}
Hence, it follows that $P_q$, $q \in Q$, consists of two induced paths with the same length $M(q)$, and $K(q)-1$ induced cycles with the same length $2M(q)$. More precisely,
\begin{description}
\item{(**)} for $q \in Q$, there is a drawing of $P$ (called \textsl{$q$-drawing}) which is op-equivalent to $P$. The $q$-drawing of $P$ consists of a maximal path of class $q$ with the length $M(q)$, and this path is surrounded by $K(q) - 1$ disjoint cycles of class $q$ with the same length $2M(q)$. Finally, there is another maximal path of class $q$ with the length $M(q)$ (called \textsl{outer path}) around the outside of the last cycle (see Example 1.1).
\end{description}
By (**) we have the following Gr\"{u}nbaum and Motzkin result \cite[Theorem 2]{flo6}:
\\[6pt]
$
\begin{array} {ll}
{\rm (1)} & 2K(q)M(q)+2 \quad \mbox{is the order of $P$}.
\end{array}
$
\\[6pt]
Notice that the outer path may be added at different positions. We define (Definition~2.2) the integer $0\leqslant S^+ (q) < M(q)$ (and also $0 < S^- (q) \leqslant M(q)$) that determines the  position.
In Theorem~\ref{theo2.1} we show the following relation between $S^+ (q)$ and $S^- (q)$:
\\[6pt]
$
\begin{array} {ll}
{\rm (2)} & S^- (q) - S^+(q) \equiv K(q) \pmod {M(q)} .
\end{array}
$
\\[6pt]
The vector $(K(q), M(q), S^+(q))$, for $q\in Q$, is called a $q$-\textsl{index-vector} of $P$, and the set $\{(K(q), M(q), S^+(q)) :  q\in Q \}$ is called the \textsl{orbit} of $P$.
The purpose of this article is to establish arithmetic equations that will allow to calculate the $(q+1)$-index-vector by the $q$-index-vector of $P$. In Theorem~\ref{theo3.1} we prove the following equality:
\\[6pt]
$
\begin{array} {ll}
{\rm (3)} &  K(q+1) = |S^+ (q), M(q)|,
\end{array}
$
\\[6pt]
where $|s,m|$ is the greatest common divisor of integers $s \geqslant 0$ and $m \geqslant 1$ ($|0, m| = m$). Let $0 < b \leqslant \afrac{M(q)}{|S^+ (q), M(q)|}$ be an integer such that $bS^+ (q) \equiv -|S^+ (q), M(q)| \pmod{M(q)}$. In Theorem~\ref{theo3.2} we prove that
\\[6pt]
$
\begin{array}{ll}
{\rm (4)} & S^- (q+1) = b\, K(q) \, .
\end{array}
$
\\[6pt]
Notice that every isomorphism between two $3$-connected plane graphs is combinatorial (Diestel \cite[p.~94] {flo2}). Hence, if $P$ is simple, then it is determined by any of its index-vectors uniquely up to op-equivalence.
Therefore, using equations \1{(1)--(4)} we can verify whether considered simple graphs in $\cal{P}$ are op-equivalent.
\begin{example}\label{exam1.1}
Let us consider the simple graph $S_0$ of\/ \1{Fig.~1}, and simple graphs  $S_1$, $S_2$ of\/ \1{Fig.~2}. We assume that edges $g_0$, $g_1$, $g_2$ are of class $0$, $1$,~$2$, respectively, and they are incident with a common vertex of degree $3$. The graph $S_0$ has the $0$-index-vector $(1,6,3)$, $S_1$ has the $1$-index-vector $(3,2,0)$ and $S_2$ has the $2$-index-vector $(2,3,1)$. Using equations \1{(1)--(4)} we check that $\{(1,6,3),(3,2,0),(2,3,1)\}$ is their common orbit \1{(}see \1{Example~3.1)}.
Hence, the graph $S_0$ is the $0$-drawing of the graph  $S_1$ and $S_2$, $S_1$ is the $1$-drawing of $S_0$ and $S_2$, and $S_2$ is the $2$-drawing of $S_0$ and $S_1$. Therefore these graphs are op-equivalent.
\end{example}

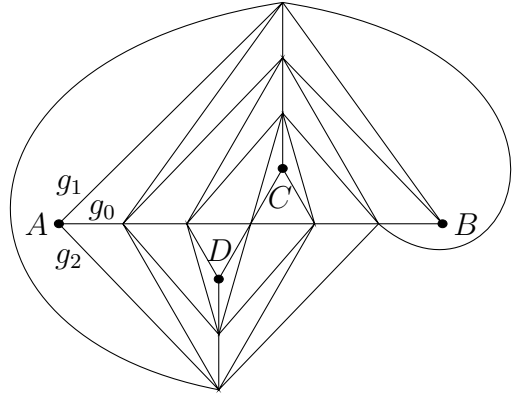
\begin{figure}
\centering

\begin{tikzpicture}[xscale=0.85, yscale=0.7361]

\coordinate  (A) at (0,0);
\coordinate  (B) at (6,0);

\coordinate  (E) at (1,0);
\coordinate  (F) at (2,0);
\coordinate  (G) at (3,0);
\coordinate  (H) at (4,0);
\coordinate  (J) at (5,0);

\coordinate (C) at (3.5,1);
\coordinate (K) at (3.5,2);
\coordinate (L) at (3.5,3);
\coordinate (M) at (3.5,4);

\coordinate (D) at (2.5,-1);
\coordinate (N) at (2.5,-2);
\coordinate (P) at (2.5,-3);

\draw (A)--node[near start,above,xshift=-7mm,yshift=-1mm]{$g_0$}(B);
\draw (C)--(M);
\draw (D)--(P);

\draw (A)--node[near start,above,xshift=-6mm,yshift=-5mm]{$g_1$}(M)--(E)--(L)--(F)--(K)--(G)--(C)--(H)--(K)--(J)--(L)--(B)--(M);
\draw (A)--node[near start,above,xshift=-4mm,yshift=-2mm]{$g_2$}(P)--(E)--(N)--(F)--(D)--(G)--(N)--(H)--(P)--(J);

\draw (J) .. controls (7,-2) and (9,3) .. (M);
\draw (M) .. controls (-2,3) and (-2,-2) .. (P);

\filldraw[black]
(A) circle (2pt)
(B) circle (2pt)
(C) circle (2pt)
(D) circle (2pt);

\draw (A) node[anchor=east]{$A$};
\draw (B) node[anchor=west]{$B$};
\draw (C) node[anchor=north, yshift=-1mm, xshift=-1pt]{$C$};
\draw (D) node[anchor=south, yshift=1mm, xshift=0.1mm]{$D$};

\end{tikzpicture}

\caption{The graph $S_0$ with the $0$-index-vector $(1,6,3)$.}
\end{figure}

\begin{figure}
\centering

\begin{tabular}[b]{c@{\hspace{-8mm}}c}

\begin{tikzpicture}[xscale=0.85, yscale=0.7361]

\coordinate (O) at (0,0);
\coordinate  (A) at (-1,0);
\coordinate  (B) at (1,0);

\coordinate (C) at (2.5,2.5);
\coordinate (D) at (-2.5,-2.5);
\coordinate (CC) at (3.5,3.5);

\coordinate  (E) at (-2,0);
\coordinate  (F) at (-2.5,0);
\coordinate  (G) at (2,0);
\coordinate  (H) at (2.5,0);

\coordinate (I) at (-0.5,1);
\coordinate (J) at (0.5,1);
\coordinate (K) at (-0.5,-1);
\coordinate (L) at (0.5,-1);
\coordinate (M) at (0,2);
\coordinate (N) at (0,-2);

\draw (A)--node[near start,below,xshift=-2mm,yshift=2.1mm]{$g_0$}(N)--(L)--(K)--(O)--node[above,xshift=-0.1mm,yshift=-1mm]{$g_1$}(A)--node[above,xshift=-2mm,yshift=-1.9mm]{$g_2$}(I)--(L)--(B)--(O)--(J)--(I)--(M)--(B);
\draw (I) .. controls (-2,1) and (-2,-1) ..(K);
\draw (J) .. controls (2,1) and (2,-1) ..(L);

\draw (I) .. controls (-1,1) and (-2,0.5) ..(F);
\draw (K) .. controls (-1,-1) and (-2,-0.5) ..(F);
\draw (J) .. controls (1,1) and (2,0.5) ..(H);
\draw (L) .. controls (1,-1) and (2,-0.5) ..(H);

\draw (H) .. controls (2.5,1) and (1,2) .. (M);
\draw (H) .. controls (2.5,-1) and (1,-2) .. (N);
\draw (F) .. controls (-2.5,1) and (-1,2) .. (M);
\draw (F) .. controls (-2.5,-1) and (-1,-2) .. (N);

\draw (CC) -- (H) -- (C) -- (CC) -- (M) -- (C);
\draw (N) -- (D) -- (F);

\draw (F) .. controls (-2.5,3) and (3,4) .. (CC);
\draw (N) .. controls (4,-2.5) and (4,3.5) .. (CC);
\draw (D) .. controls (4,-4.5) and (5,3.5) .. (CC);

\filldraw[black]
(A) circle (2pt)
(B) circle (2pt)
(C) circle (2pt)
(D) circle (2pt);

\draw (A) node[anchor=east]{$A$};
\draw (B) node[anchor=west]{$B$};
\draw (C) node[anchor=west]{$C$};
\draw (D) node[anchor=east]{$D$};

\end{tikzpicture}
 &

\begin{tikzpicture}[xscale=0.85,yscale=0.7361]

\coordinate (O) at (0,0);
\coordinate  (A) at (-1,0);
\coordinate  (B) at (0,-2);
\coordinate (C) at (2,0);
\coordinate (D) at (1,2);

\coordinate  (BB) at (0,-3);
\coordinate (DD) at (1,3);

\coordinate  (E) at (1,0);

\coordinate  (F) at (-0.5,1);
\coordinate  (G) at (0.5,1);
\coordinate  (H) at (1.5,1);

\coordinate  (I) at (-0.5,-1);
\coordinate  (J) at (0.5,-1);
\coordinate  (K) at (1.5,-1);

\coordinate (fake1) at (0,3.99);
\coordinate (fake2) at (0,-4.5);

\draw (A)--node[near start,above,xshift=-1mm,yshift=-0.6mm]{$g_0$}(F)--(O)--node[above,xshift=-0.1mm,yshift=-1mm]{$g_2$}(A)--node[below,xshift=-1.5mm,yshift=1.9mm]{$g_1$}(I)--(O)--(G)--(E)--(O)--(J)--(E)--(H)--(C)--(E)--(K)--(C);
\draw (DD)--(F)--(H)--(DD)--(G)--(D)--(H)--(DD)--(D);
\draw (I)--(BB)--(K)--(I)--(B)--(J)--(BB)--(B);

\draw (F) .. controls (-2,1) and (-2,-1) ..(I);
\draw (H) .. controls (3,1) and (3,-1) ..(K);

\draw (F) .. controls (-3,1) and (-2.5,-3) ..(BB);
\draw (K) .. controls (4,-1) and (3,3) ..(DD);
\draw (BB) .. controls (4,-2) and (4,3) ..(DD);

\filldraw[black]
(A) circle (2pt)
(B) circle (2pt)
(C) circle (2pt)
(D) circle (2pt);

\draw (A) node[anchor=east]{$A$};
\draw (B) node[anchor=south,yshift=1mm]{$B$};
\draw (C) node[anchor=west]{$C$};
\draw (D) node[anchor=north,yshift=-1mm]{$D$};

\filldraw[white] (fake1) circle (0.1pt);
\filldraw[white] (fake2) circle (0.1pt);

\end{tikzpicture}
\end{tabular}

\caption{The graph $S_1$ with the $1$-index-vector $(3,2,0)$, and $S_2$ with the $2$-index-vector $(2,3,1)$.}
\end{figure}
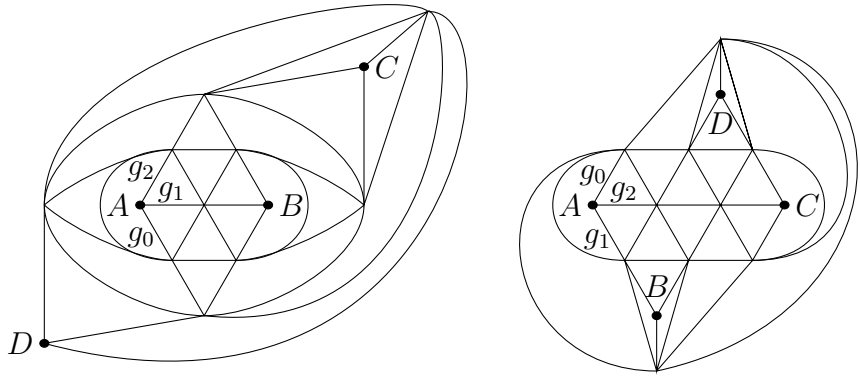

We are going to present some applications of equations \1{(1)--(4)}. From (**) follows that $X = \{ (k,m,s) \in \mathbb{\mathbb{Z}}^3 :  1\leqslant k\, , 1 \leqslant m\, , 0 \leqslant s < m \}$ is the set of all index-vectors of graphs in $\cal{P}$. In Theorem~\ref{theo4.1} we  characterize one point orbits: $(k, m, s) \in X$  is a one point orbit of a graph in $\cal P$ if and only if $ m = kn$, $s = kx$, where $k$, $x$, $n$ are integers such that $k \geqslant 1$, $0 \leqslant x < n $ and $n$ is a divisor of $x^2 + x + 1$.
By Schinzel and Sierpi\'{n}ski \cite{flo8} the set of all integral solutions of
the equation $x^2 +  x+ 1 = 3 y^2$ is infinite. It follows that there is an infinite family of graphs in ${\cal P}$ with one point orbit.

If $P$ has an index-vector $(K(q), M(q), S^+(q))$, then its mirror reflection has the index-vector $(K(q), M(q), M(q)-S^-(q))$. We say that $P$ is \textsl{double mirror symmetric} if there exists $q_1, q_2 \in Q$ such that  $S^+ (q_i) = M(q_i)-S^-(q_i)$, for $i = 1, 2$. In Theorem~\ref{theo4.2} we show that $P$ is double mirror symmetric if and only if $P$ has a one point orbit of the form $\{ (k,k,0) \}$ or $ \{ (k,3k,k) \}$ for some $k\in \mathbb{N}$.

A \textsl{bond} of a connected plane graph $G$ is a minimal non-empty edge cut (Diestel \cite[p.~25]{flo2}). A \textsl{Hamilton bond} of $G$ is a bond $B$ such that both components of $G \backslash B$ are trees, and the trees are called \textsl{end-trees} of the bond. It is known that a Hamilton bond in $G$ is the algebraic dual of a~Hamilton cycle in the dual graph (see Stein \cite{flo10}). Goodey  \cite{flo4} showed that every  $2$-con\-nect\-ed cubic plane graph whose faces are only triangles or hexagons has a Hamilton cycle. Hence, every graph in ${\cal P}$ has a Hamilton bond. In Theorem~\ref{theo5.1} we prove that for every Hamilton bond of a graph in $ \cal{P}$ the end-trees of the bond have the same order. In Theorem~\ref{theo5.2} we prove a similar result for the family $\cal{H}$ of all $2$-con\-nect\-ed plane triangulations all of whose vertices are of degree at most~$6$. Namely, for every Hamilton bond of a graph in $ \cal{H}$ the orders of end-trees of the bond differ by at most~$3$.

In \cite{flo7} Meyer introduced the following notation of equitable colorability. A graph $G$ is  \textsl{equitable $k$-colorable} if there exists a proper $k$-coloring of $G$ such that the size of any two color classes differ by at most one. It is easy to see, by condition (**), that every graph in $\cal{P}$ is equitable $4$-colorable. We know that every graph in ${\cal P}$ has a Hamilton bond and the end-trees of the bond have the same order. One may guess that every graph in $\cal{P}$ has a Hamilton bond such that end-trees are equitable $2$-colorable. In fact, in Theorem 6.1 we prove (using the equations {(1)--(3)}) that this is the case if  $P$ is of order $4n+2$, $n \in \mathbb{N}$. We also prove that if $P$ has an index-vector $(K(q), M(q), S^+(q))$ such that $M(q)$ is odd and $K(q) \geqslant \frac{M(q)}{3}$, then there are two disjoint induced paths which vertices together span all of $P$ (see Theorem~\ref{theo6.2}).

\section{Index-vector}
Let $\cal{P}$ be the family of all $2$-connected plane triangulations all of whose vertices are of degree~$3$ or~$6$. Fix $P \in \cal{P}$. Let $P$ be factorable into factors $P_0$, $P_1$, $P_2$ (indexed by elements of the cyclic group $Q = \{0,1,2\}$) satisfying the condition (*). We recall that a subgraph of $P$ is said to be of \textsl{class} $q\in Q$ if any edge of the subgraph belongs to the factor $P_q$. Let $M(q)$ be the length of a maximal path of class $q$, and $K(q)$ the distance between the two maximal paths of this class in $P$.
\begin{definition}\label{def2.1}
Let $A$ be a vertex of degree $3$ in the graph $P$, and suppose that $[A, q]$ is a maximal path of class $q$ with a fixed orientation $v_0 v_1 \ldots \ v_{M(q)}$ such that $A = v_0$ is its {\em initial} and $A_q = v_{M(q)}$ is its {\em terminal} vertex. An edge $e$ adjacent to the path $[A, q]$ is called a \textsl{left branch} of the path if it is branching off from $[A, q]$ to the left \1{(}more precisely, if  $v_j v_{j+1}, e$, $0\leqslant j < M(q)$, or $e, v_{j-1}v_j$, $0 < j \leqslant M(q)$,
are counter clockwise successive edges incident with the vertex $v_j$\1{)}. Otherwise, it is called  a \textsl{right branch} of the path. We put
$$[ A, q] (e) = \left\{\begin{array}{ll} j & \quad \mbox{if $e$ is a left branch of\/ $[A, q ]$},
\\[4pt]
 2M(q) - j & \quad \mbox{if $e$ is a right branch of\/ $[A, q ]$}.
\end{array}\right.
$$
\end{definition}
\begin{remark}\label{rem2.1}
Notice that $[A, q] = v_0 v_1 \ldots \ v_{M(q)}$ if and only if $[A_q,q]=  v_{M(q)} v_{M(q)-1} \ldots \ v_0$. An edge $e$ is a left branch of the path $[A,q]$ if and only if it is a right branch of the path $[A_q,q]$. Moreover, we have
$$
\left| [A_q, q ] (e) - [ A, q ] (e) \right| = M(q) .
$$
\end{remark}
\begin{lemma}\label{lem2.1}
Let $A, C$ be ends of two different maximal paths of class $q$.
\begin{description}
\item \1{(1)} If $e,\hat{e}$ and $f,\hat{f}$ are pairs of end-edges  of  two minimal paths of class $q+1$ so that $e, f$ are adjacent to the path $[A, q]$ and  $\hat{e}, \hat{f}$ are adjacent to the path $[C, q]$, then
$$
[A, q ] (f) - [A, q ] (e) \equiv [C,q] (\hat{e})-[C,q] (\hat{f})\  \pmod {2M(q)} .
$$
\item \1{(2)} Moreover, if the edge $e$ is incident with $A$, and the edge $\hat{f}$ is incident with $C$, then
$$
[A, q ] (f) = [C, q](\hat{e}).
$$
\end{description}
\end{lemma}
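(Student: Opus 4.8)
The plan is to work inside the $q$-drawing of $P$ guaranteed by (**). Write $M=M(q)$ and $K=K(q)$, let $\gamma_0=[A,q]$ be the central path, let $\gamma_1,\dots,\gamma_{K-1}$ be the concentric class-$q$ cycles (each of length $2M$) and $\gamma_K=[C,q]$ the outer path, and call the triangulated region between $\gamma_i$ and $\gamma_{i+1}$ the $i$-th \emph{band}. First I would extract from condition (*) the local picture in each band: every vertex of an internal cycle $\gamma_i$ $(0<i<K)$ is incident with exactly one edge of class $q+1$ pointing toward $\gamma_{i-1}$ and exactly one pointing toward $\gamma_{i+1}$. Consequently a path all of whose edges have class $q+1$ and which meets $[A,q]$ at one end and $[C,q]$ at the other --- a \emph{minimal path of class $q+1$} --- crosses every band in exactly one edge; such a path has length $K$, is determined by any single one of its edges, and following it from $[A,q]$ outward to $[C,q]$ defines a bijection $e\mapsto\hat e$ from the class-$(q+1)$ branches of $[A,q]$ to those of $[C,q]$, where $e$ and $\hat e$ are its two end-edges.

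The heart of the proof is to identify this bijection as an affine map on $\mathbb Z/2M\mathbb Z$. Attach to each minimal path $R$ of class $q+1$, for $0\le i\le K$, the ``angular coordinate'' of the vertex at which $R$ meets $\gamma_i$: on $\gamma_0$ this is $[A,q](e)$ for the starting edge $e$, on $\gamma_K$ it is $[C,q](\hat e)$ for the terminal edge $\hat e$, and on an internal cycle one uses a counterclockwise labelling of its $2M$ vertices. Within the $i$-th band the edges of class $q+1$ form a planar perfect matching between the $2M$ positions on $\gamma_i$ and the $2M$ positions on $\gamma_{i+1}$; for the first and the last band, where one of the two sides is a path, one replaces that path by the closed walk of length $2M$ obtained by traversing it once in each direction, whose positions are exactly the class-$(q+1)$ branch-slots ordered by $[A,q](\cdot)$, resp.\ $[C,q](\cdot)$, according to Definition~\ref{def2.1}. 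Because a planar perfect matching between two nested $2M$-cycles either preserves or reverses cyclic order, crossing the $i$-th band transforms the coordinate by a map $x\mapsto\varepsilon_i x+c_i$ with $\varepsilon_i\in\{1,-1\}$; composing over $i=0,\dots,K-1$ then gives $[C,q](\hat e)\equiv\varepsilon\,[A,q](e)+c\pmod{2M}$ with $\varepsilon=\prod_i\varepsilon_i$.

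The step I expect to be the main obstacle is showing that $\varepsilon=-1$, i.e.\ that the bijection $e\mapsto\hat e$ is orientation-reversing. Morally this is forced because $[A,q]$ bounds the ring of cycles from the inside and $[C,q]$ from the outside, while Definition~\ref{def2.1} applies the same left-branch/right-branch convention, measured from the chosen initial vertex, to both paths; the internal bands, being matchings between two genuine nested cycles, are order-preserving, so the sign change must occur an odd number of times among the bands adjacent to $[A,q]$ and to $[C,q]$. Making this precise requires carefully aligning $[A,q](\cdot)$ (resp.\ $[C,q](\cdot)$) with the counterclockwise orientation of the neighbouring cycle, keeping track of which branch counts as ``left,'' of the two directions in which a path can be traversed, and of the inside/outside of each cycle. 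A convenient way to pin down the global sign is to verify it on the smallest member of $\cal P$, the tetrahedron $K_4$ (where $K=M=1$, the two minimal paths of class $q+1$ are single edges, and a direct check of Definition~\ref{def2.1} gives the bijection $x\mapsto c-x$ on $\mathbb Z/2\mathbb Z$), and then to propagate it through the drawing.

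Once $\varepsilon=-1$ is known, part~(1) is immediate: applying $[C,q](\hat e)\equiv c-[A,q](e)$ to the two minimal paths of class $q+1$ with end-edge pairs $(e,\hat e)$ and $(f,\hat f)$ and subtracting gives $[C,q](\hat e)-[C,q](\hat f)\equiv[A,q](f)-[A,q](e)\pmod{2M(q)}$. For part~(2), if $e$ is incident with $A=v_0$ then $[A,q](e)=0$, and if $\hat f$ is incident with $C$ then $[C,q](\hat f)=0$; substituting into part~(1) yields $[A,q](f)\equiv[C,q](\hat e)\pmod{2M(q)}$, and since by Definition~\ref{def2.1} both quantities lie in $\{0,1,\dots,2M(q)-1\}$, they must be equal.
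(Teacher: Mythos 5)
Your overall plan (work in the $q$-drawing, follow a minimal class-$(q{+}1)$ path band by band, and transfer the ``doubled-walk'' coordinate $[A,q](\cdot)$ across each band by an affine map of $\mathbb{Z}/2M(q)\mathbb{Z}$) is a legitimate elaboration of the picture the paper treats as evident, and your final deductions are fine: once $[C,q](\hat e)\equiv c-[A,q](e)$ is known, (1) is a subtraction and (2) follows because by (*) the class-$(q{+}1)$ edge at $A$ (resp.\ at $C$) is a left branch with coordinate $0$. The problem is that the decisive step, $\varepsilon=-1$, is never proved, and it is not a technicality: the orientation reversal \emph{is} the content of the congruence in (1) (with $\varepsilon=+1$ you would get $[A,q](f)-[A,q](e)\equiv[C,q](\hat f)-[C,q](\hat e)$ instead). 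You name it as the main obstacle, argue it ``morally,'' and then propose to verify it on $K_4$ and ``propagate it through the drawing.'' Checking one example establishes nothing for a general $P\in{\cal P}$, and no propagation mechanism (induction, reduction, covering map) is described; indeed the bookkeeping you would have to propagate differs between cases (for $K(q)=1$ there is a single band bounded by two paths, for $K(q)\geqslant 2$ two extreme bands and $K(q)-2$ internal ones), so the parity-of-flips count cannot be transported from $K_4$ without carrying out exactly the left/right-branch versus counterclockwise-orientation analysis you defer. As written, the proof has a genuine gap at its core.

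For comparison, the paper avoids fixing the absolute sign altogether: it observes that in the $q$-drawing the \emph{exact} equality $[A',q](f)-[A',q](e)=[C',q](\hat e)-[C',q](\hat f)$ holds for a suitable choice of orientations $A'\in\{A,A_q\}$, $C'\in\{C,C_q\}$, and then Remark~2.1 (reversing a path changes every branch coordinate by exactly $M(q)$, hence changes such differences by $0$ or $\pm 2M(q)$) converts this into the congruence modulo $2M(q)$; part (2) then follows from (*) as you do. The cleanest repair of your argument is the same device: do not chase the global sign of the composite band maps, but allow reorienting one of the two paths via Remark~2.1, for which the exact relation is immediate from the drawing. A minor further slip: branch coordinates lie in $[0,2M(q)]$, not $\{0,\dots,2M(q)-1\}$ (a right branch at $v_0$ receives the value $2M(q)$), so your last ``congruent and in range, hence equal'' step in (2) needs one extra word excluding the pair $\{0,2M(q)\}$.
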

\begin{proof}
It is more clear when we consider the $q$-drawing of $P$. Notice that
$$
[A', q ] (f) - [A', q ] (e) = [C',q] (\hat{e})-[C',q] (\hat{f})
$$
for some $A' \in \{A, A_q\}$ and for some  $C' \in \{C,  C_q\}$. Hence, by Remark 2.1 we obtain (1). By (*), $e$ is a left branch of the path $[A,q]$ and $\hat{f}$ is a left branch of the path $[C, q]$. Hence, $[A, q ](e) = 0$ and $[C, q](\hat{f}) = 0$, which yields (2).
\qed
\end{proof}
\begin{definition}\label{def2.2}
Let $A$, $C$ be ends of two different maximal paths of class $q$ in the graph $P$, and suppose $f$
\1{(}or $g$\1{)} is the first edge of the path $[C, q+1 ]$ \1{(}or $[C, q-1]$, respectively\1{)}
which is adjacent to the path $[A, q]$. Let
$$S^+ (q) = \left\{\begin{array}{ll}
[A, q ] (f),& \quad \mbox{if $f$  is a left branch of $[A, q ]$},
\\[4pt]
[A, q ] (f)-M(q),& \quad \mbox{if $f$ is a right branch of $[A, q ]$},
\end{array}\right.
$$
$$S^- (q) = \left\{\begin{array}{ll}
[A, q ] (g),& \quad \mbox{if $g$  is a left branch of $[A, q ]$},
\\[4pt]
[A, q ] (g)-M(q),& \quad \mbox{if $g$ is a right branch of $[A, q ]$},
\end{array}\right.
$$
\end{definition}

Notice that by Remark~2.1 and Lemma~2.1(2) the definition of $S^+ (q)$ and $S^- (q)$ do not depend on the choice of ends of two different maximal paths of class $q$. The following theorem shows that $S^+ (q)$ is determined by $S^- (q)$ and vice versa.

\begin{theorem}\label{theo2.1}
$$
S^- (q) - S^+(q) \equiv K(q) \pmod {M(q)} .
$$
\end{theorem}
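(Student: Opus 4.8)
The idea is to evaluate $S^+(q)$ and $S^-(q)$ inside the $q$-drawing of $P$ and to compare the two edges $f,g$ of Definition~\ref{def2.2} directly. By Remark~\ref{rem2.1} and Lemma~\ref{lem2.1}(2) the numbers $S^+(q)$ and $S^-(q)$ are independent of the choice of the two maximal paths of class $q$ and of their ends, so I would work in the $q$-drawing of~(**): let $[A,q]=v_0v_1\ldots v_{M(q)}$ be the innermost path, let the $K(q)-1$ concentric cycles of class $q$ be rings $1,\ldots,K(q)-1$, let the outer path of class $q$ be ring $K(q)$, and let $C$ be one of its ends. Recall from Definition~\ref{def2.1} that a branch $e$ of $[A,q]$ carries a value $[A,q](e)\in\{0,\ldots,2M(q)-1\}$; when $e$ is the last edge of a path that reaches $[A,q]$ from outside, I will call $[A,q](e)$ the position at which that path hits $[A,q]$.

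The crux is that $f$ and $g$ are obtained by walking straight inwards from $C$. Starting at the degree-$3$ vertex $C$, the maximal path $[C,q+1]$ runs monotonically inward through rings $K(q)-1,K(q)-2,\ldots,1$ (the intermediate rings carry no degree-$3$ vertex, so the path does not end and does not touch $[A,q]$ before the $K(q)$-th edge), and that $K(q)$-th edge is $f$; likewise $[C,q-1]$ produces $g$ after $K(q)$ inward steps. Now apply condition~(*) to the rotation at each interior (degree-$6$) vertex of a ring: one reads off that crossing a ring inward along a class $q+1$ edge keeps the position fixed, while crossing inward along a class $q-1$ edge changes the position by one in a fixed direction; adding the contribution of the single first step out of $C$, one finds that $[C,q+1]$ hits $[A,q]$ at some position $s$ and $[C,q-1]$ hits it at position $s+K(q)$. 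A short case check from Definition~\ref{def2.1} (the left- and right-branch cases separately) shows that the value $[A,q](e)$ of a branch equals the position it represents, so $[A,q](f)=s$ and $[A,q](g)=s+K(q)$, whence
$$[A,q](g)-[A,q](f)\equiv K(q)\pmod{2M(q)}.$$
Finally, by Definition~\ref{def2.2} each of $S^+(q),S^-(q)$ differs from $[A,q](f),[A,q](g)$ respectively by $0$ or $M(q)$, so reducing the last congruence modulo $M(q)$ gives exactly $S^-(q)-S^+(q)\equiv K(q)\pmod{M(q)}$.

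The step requiring care is the orientation bookkeeping in the middle paragraph: from the counter clockwise convention of condition~(*) and Definition~\ref{def2.1} one must check not just that the class $q+1$ and class $q-1$ paths leaving $C$ diverge by one position per ring, but that they diverge in the direction making the accumulated offset $+K(q)$ rather than $-K(q)$, and that the single first step out of the degree-$3$ vertex $C$ is accounted for correctly (mishandling it would produce $K(q)-2$ in place of $K(q)$). Getting the behaviour at the degree-$3$ vertices right — the vertex $C$ and the two ends of $[A,q]$ — is where the argument is most delicate; the outcome can be checked against Example~\ref{exam1.1}, where $(K(0),M(0),S^+(0))=(1,6,3)$ forces $S^-(0)=4$, i.e.\ $S^-(0)-S^+(0)=1=K(0)$.
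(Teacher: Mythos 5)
Your proof is correct and is essentially the paper's own argument: the paper likewise works in the $q$-drawing, takes the first edges $f$ of $[C,q+1]$ and $g$ of $[C,q-1]$ adjacent to $[A,q]$, establishes the key congruence $[A,q](g)-[A,q](f)\equiv K(q)\pmod{2M(q)}$ (phrasing your one-position-per-ring-crossing divergence via the last common vertex $V$ and the length $|VE|$ of the descent below it), and then reduces modulo $M(q)$ exactly as you do via Definition~\ref{def2.2}. The orientation bookkeeping you flag as delicate (that the offset accumulates to $+K(q)$ rather than $-K(q)$, and the treatment of the degree-$3$ vertices) is left just as implicit in the paper's proof, so your proposal matches it in both substance and level of detail.
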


\begin{proof}
It is more clear if one considers the $q$-drawing of $P$. Let $A$, $C$ be ends of two different maximal paths of class $q$ in $P$, and suppose $f$(or $g$) is the first edge of the path $[C, q+1 ]$ (or $[C, q-1]$) which is adjacent to the path $[A,q]$, say in a vertex $E$ (or $F$, respectively).  If $V$ is the last common vertex of the path $[C, q+1]$ with a segment $CF$ of $[C, q-1]$, then we have
$$
 [A, q] (g) - [ A, q] (f) \equiv |VE| \equiv K(q) \pmod{ 2M(q)}.
$$
Hence,
$$
S^- (q) - S^+(q) \equiv [A, q] (g) - [A, q] (f) \equiv \ K(q)  \pmod{ M(q)},
$$
which completes the proof.
\qed
\end{proof}
\section{Billiards and structure of graphs in  $\cal{P}$}

Let $\cal{P}$ be the family of all $2$-connected plane triangulations all whose vertices are of degree~$3$ or~$6$. Fix $P \in \cal{P}$ and $q \in Q$ (where $Q = \{0,1,2\}$ is the cyclic group). Let $(K(q), M(q), S^+(q))$ be the $q$-index-vector of $P$.

If $0 < \theta < 1$, then a \textsl{$\theta$-billiard sequence} is a sequence $F(j) \in [0,1)$, $j \in N$, which satisfies the following conditions (see \cite{flo3}): $F(1) = 0$  and
$$
F(j) + F(j+1) = \left\{\begin{array}{ll}
\theta \quad \hbox{or} \quad 1+\theta, & \hbox{ for an odd }  j  ,
\\[4pt]
0 \quad \hbox{or} \quad 1,  &  \hbox{ for an even }  j  .
\end{array}\right.
$$

We consider a billiard table rectangle with perimeter of length $1$ with the bottom left vertex
labeled $v_0$, and the others, in a clockwise direction $v_1,v_2$ and $v_3$.
The distance from $v_0$ to $v_1$ is $\theta/2$. We describe the position of
points on the perimeter by their distance along the perimeter measured
in the clockwise direction from $v_0$, so that $v_1$ is at position $\theta/2$,
$v_2$ at $1/2$ and $v_3$ at $(\theta+1)/2$. If a billiard ball is
pushed from position $F(1)= 0$ at the angle of $\pi/4$, then it will rebound against
the sides of the rectangle consecutively at points $F(2)$, $F(3)$, \dots\ .

The following Lemma~3.1 comes from  \cite[Theorem~3.3(2) and Example~3.1]{flo3}.
\begin{lemma}\label{lem3.1}
If $0 < \afrac{s}{m} < 1$ is a fraction, $d = |s, m|$ and \ $F(j), j \in N,$ is the $\afrac {s}{m}$-billiard sequence, then,
 \\[6pt]
 \1{(1)} $\{2mF(1), 2mF(2),\ldots, 2mF(\afrac{m}{d})\} = \{0, 2d, 4d, \ldots, 2m-2d\}$.
   \\[6pt]
 \1{(2)}
 $2m F(\afrac{m}{d}) = \left\{\begin{array}{ll}
 s,  & \mbox{for  $\afrac{s}{d}$ even},
 \\[4pt]
 m,  & \mbox{for  $\afrac{m}{d}$ even},
 \\[4pt]
 s+m, & \mbox{for $\afrac{s}{d}$ and $\afrac{m}{d}$ both odd},
 \end{array}\right.
 $
 \\[4pt]
\indent
and \ $2m F(j) \notin \{s, m, s+m\}$, \  for $1 \leqslant j < \afrac{m}{d}$.
 \\ [6pt]
 \1{(3)} If $a, b$  are natural numbers, $am - bs = d$ and \ $b \leqslant \afrac{m}{d}$, then,
 \\ [6pt]
 \indent
$ 2m F(b) = \left\{ \begin{array}{ll}
 s + d,   & \mbox{for  $a$ even},
 \\[4pt]
 m  - d,  & \mbox{for  $b$  even},
 \\[4pt]
 s+m + d, & \mbox{for $a$  and  $b \neq 1$ both odd},
 \\[4pt]
 0, & \mbox{for   $a=b=1$}.
 \end{array}\right.
 $
\end{lemma}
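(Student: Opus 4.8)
The plan is to reduce everything to a single explicit formula for $G(j) := 2mF(j)$. Multiplying the two cases of the defining relation of the $\afrac{s}{m}$-billiard sequence by $2m$ turns it into $G(1)=0$ together with $G(j+1)\equiv 2s-G(j)\pmod{2m}$ for odd $j$ and $G(j+1)\equiv -G(j)\pmod{2m}$ for even $j$, where $G(j+1)$ is always the representative in $\{0,1,\ldots,2m-1\}$ (so, by induction, each $G(j)$ is an integer). Unwinding this recursion gives the closed form $G(j)\equiv (-1)^j\,2\lfloor j/2\rfloor\,s\pmod{2m}$ for all $j\ge 1$. The key structural observation is that every $G(j)$ is therefore an even multiple of $s$, hence a multiple of $\gcd(2s,2m)=2d$; consequently all the $G(j)$ lie in the cyclic subgroup $H=\langle 2d\rangle=\{0,2d,4d,\ldots,2m-2d\}$ of $\mathbb{Z}/2m\mathbb{Z}$, and $|H|=m/d$.

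For (1) it then suffices to show that $G(1),\ldots,G(m/d)$ are pairwise distinct, since these are $m/d$ elements of a set of size $m/d$. Suppose $G(i)=G(j)$ with $1\le i<j\le m/d$, and put $a_i=\lfloor i/2\rfloor$, $a_j=\lfloor j/2\rfloor$. If $i\equiv j\pmod 2$, then $(a_i-a_j)s\equiv 0\pmod m$, so $m/d$ divides $a_i-a_j$; as $|a_i-a_j|<m/d$ this forces $a_i=a_j$, and then equal parity gives $i=j$. If $i\not\equiv j\pmod 2$, then $(a_i+a_j)s\equiv 0\pmod m$, so $m/d$ divides $a_i+a_j$; since $a_i+a_j\le\lfloor (i+j)/2\rfloor\le m/d-1$, this forces $a_i=a_j=0$, i.e.\ $i=j=1$. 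Both conclusions contradict $i<j$, which proves (1).

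Parts (2) and (3) are then arithmetic bookkeeping on the closed form. For (2): evaluate $G(m/d)$ directly; since $\gcd(s/d,m/d)=1$ forces $s/d$ odd whenever $m/d$ is even, reducing $m(s/d)\bmod 2m$ according to the parity of $s/d$ yields $G(m/d)\in\{s,m,s+m\}$ exactly as listed. For the non-membership assertion, note that $s\in H\iff s/d$ is even, $m\in H\iff m/d$ is even, and $s+m\in H\iff s/d$ and $m/d$ are both odd; in each of the three cases precisely one of $s$, $m$, $s+m$ lies in $H$ and it equals $G(m/d)$, so by (1) it does not occur among $G(1),\ldots,G(m/d-1)$, while the remaining two lie outside $H$ and hence are never values of $G$. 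For (3): dividing $am-bs=d$ by $d$ gives $a(m/d)-b(s/d)=1$, whence $\gcd(a,b)=1$ and the four listed cases are exhaustive; substituting $bs=am-d$ into the closed form for $G(b)$ and reducing $am\bmod 2m$ by the parity of $a$ produces $s+d$, $m-d$, $s+m+d$, or $0$ respectively. The hypothesis $b\le m/d$ keeps the outcome in $\{0,\ldots,2m-1\}$; in the case $a,b$ both odd with $b\neq 1$ one checks $s+d<m$ from $b\ge 3$, $b\le m/d$ and $a(m/d)-b(s/d)=1$.

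The argument is entirely elementary, so there is no single hard step; the only real work is the accounting — the distinctness claim in (1), and then keeping straight reductions modulo $m$ versus modulo $2m$ and the interlocking parities of $a$, $b$, $s/d$, $m/d$ in (2) and (3). (Alternatively the lemma can be read off from the standard unfolding of a rectangular billiard table onto a flat torus, but the recursion above is self-contained and shorter to write.)
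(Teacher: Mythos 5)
Your proof is correct, and it necessarily differs from the paper, because the paper does not prove Lemma~3.1 at all: it imports it verbatim from the author's earlier billiards paper \cite{flo3} (Theorem~3.3(2) and Example~3.1), where it comes out of the geometry of the billiard trajectory together with Diophantine-approximation/Farey machinery (compare Remark~3.1 here, which interprets the hypothesis $am-bs=d$, $b\leqslant m/d$ of part (3) as a statement about consecutive Farey fractions). Your route is a self-contained elementary substitute: pass to $G(j)=2mF(j)$, note that the defining relation determines $G(j+1)$ uniquely in $[0,2m)$, derive the closed form $G(j)\equiv(-1)^{j}\,2\lfloor j/2\rfloor s\pmod{2m}$, and then do the counting and parity bookkeeping. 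I checked the details and they hold: in (1) the mixed-parity case uses $a_i+a_j=\lfloor(i+j)/2\rfloor\leqslant m/d-1$, which is exactly right; in (2) the three cases are exhaustive because $s/d$ and $m/d$ cannot both be even, and your observation that exactly one of $s,m,s+m$ lies in the subgroup of multiples of $2d$ cleanly gives the non-membership claim; in (3) the one step you left as ``one checks'' --- that $s+d<m$ when $a,b$ are both odd and $b\geqslant 3$ --- does follow, since $s/d=m/d-1$ would give $(b-a)(m/d)=b-1$ and hence $m/d\leqslant b-1<b\leqslant m/d$, a contradiction. What the citation buys the paper is brevity and the conceptual link to the billiard/Farey picture it reuses in Remark~3.1; what your computation buys is independence from \cite{flo3} and a short, purely arithmetic verification of exactly the statement needed.
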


\begin{remark}\label{rem3.1}
The sequence of all reduced fractions of the interval $[0,1]$ with denominators not exceeding $n$, listed in order of their size, is called the Farey sequence of order $n$
\1{(}$\afrac{0}{1}$ is the smallest and $\afrac{1}{1}$ the biggest fraction of any Farey sequence\1{)}.
Let $0 \leqslant \afrac{s}{m} < 1$ be a fraction, and suppose that $\afrac{s'}{m'} = \afrac{s}{m} $ is a fraction in lowest terms. Then $\afrac{s'}{m'} < \afrac{a}{b}$ are consecutive fractions in the Farey sequence of order $m'$ \/ if and only if \/ $a m - bs = |s, m|$ and $b \leqslant m'$  \1{(}see Schmidt \cite{flo9}\/\1{)}.
\end{remark}
The following theorem shows that the structure of the graph $P$ is closely related to the $\afrac{S^+(q)}{M(q)}$-billiard sequences.

\begin{theorem}\label{theo3.1}
Let $A$ be a vertex of degree $3$ in $P$, and suppose that $e_1$, $e_2$, \ldots, $e_n$ is a sequence of all consecutive edges of the path $[A, q+1] $ which are adjacent to the path $[A, q]$.
\\[6pt]
$
\begin{array}{ll}
{\rm (1)} \ \hbox{If} \ n > 1 , \ \hbox{then}
\end{array}
$
$$
[A, q](e_j) = 2M(q)F(j), \hbox{ for } 1 \leqslant j \leqslant n,
 $$
\indent
where $F(j)$, $j \in \mathbb{N}$, is  the $\afrac{S^+ (q)}{M(q)}$-billiard sequence,
\\ [6pt]
$
\begin{array}{lll}
{\rm (2)}& n = \afrac{M(q)}{|S^+ (q), M(q)|},
\\ [6pt]
{\rm (3)}& K(q+1) = |S^+ (q), M(q)|,
\\ [6pt]
{\rm (4)}& K(q+ 1) M(q+ 1) = K(q) M(q).
\end{array}
$
\end{theorem}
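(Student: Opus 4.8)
The plan is to work entirely inside the $q$-drawing of $P$, where the class-$q$ edges form two parallel maximal paths (the inner path $[A,q]$ and the outer path) joined by $K(q)-1$ concentric cycles of length $2M(q)$, and where the class-$(q+1)$ path $[A,q+1]$ threads diagonally through the picture, bouncing back and forth between consecutive class-$q$ cycles. The central observation is that, after identifying the two class-$q$ paths with the segment $[0,2M(q)]$ on the perimeter of a rectangle (inner path = bottom side rescaled, outer path = top side, the cyclic identification of each $2M(q)$-cycle giving the left-right wraparound), the successive ``hits'' of $[A,q+1]$ on the inner path $[A,q]$ are exactly the even-indexed rebounds of a billiard ball launched at $\pi/4$. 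So first I would set up this correspondence carefully: I would use the factorization condition (*) to show that along $[A,q+1]$ the left/right branching alternates in a controlled way, so that between two consecutive adjacencies $e_j,e_{j+1}$ with $[A,q]$ the path $[A,q+1]$ makes exactly one full diagonal crossing and touches the outer path once; the two defining recurrences of a $\theta$-billiard sequence (the odd-$j$ relation $F(j)+F(j+1)\in\{\theta,1+\theta\}$ coming from the outer reflection, the even-$j$ relation $F(j)+F(j+1)\in\{0,1\}$ from the inner reflection) then fall out of the geometry, with $\theta=S^+(q)/M(q)$ because $S^+(q)$ is by Definition~2.2 precisely the offset $[A,q](f)$ of the first adjacency of $[C,q+1]$ on $[A,q]$, i.e.\ the position of the first corner $v_1$ of the rectangle. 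This establishes~(1), with $F(1)=0$ corresponding to $e_1$ being incident with $A$ (Lemma~2.1(2)).

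Once~(1) is in place, parts~(2),(3),(4) are largely bookkeeping via Lemma~3.1. For~(2): the path $[A,q+1]$ is maximal, so the sequence $e_1,\dots,e_n$ of its adjacencies with $[A,q]$ terminates exactly when the billiard ball first returns to a corner or to the starting side in a way that closes up the path — and by Lemma~3.1(2) the quantity $2M(q)F(j)$ first lands in $\{S^+(q),M(q),S^+(q)+M(q)\}$ precisely at $j=M(q)/|S^+(q),M(q)|$, which is where the diagonal path reaches the ``far end'' and becomes the outer path. Hence $n=M(q)/|S^+(q),M(q)|$. For~(3): by Lemma~3.1(1) the set $\{[A,q](e_j):1\le j\le n\}=\{0,2d,4d,\dots,2M(q)-2d\}$ with $d=|S^+(q),M(q)|$; these are the $n$ equally spaced points on the inner $2M(q)$-cycle where $[A,q+1]$ attaches, so going once around we see the class-$(q+1)$ structure has ``period'' $2d$ — more precisely, rotating the drawing by $2d$ along the cycle is an automorphism-like shift that counts how many parallel class-$(q+1)$ cycles separate the two class-$(q+1)$ maximal paths, giving $K(q+1)=d$. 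For~(4): this is just counting the order of $P$ two ways using~(1): $2K(q)M(q)+2=2K(q+1)M(q+1)+2$, so $K(q)M(q)=K(q+1)M(q+1)$.

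I expect the main obstacle to be part~(2) together with the rigorous justification of the billiard correspondence in~(1): one must verify that the alternation of reflections is exactly as the billiard recurrence demands — that $[A,q+1]$ never ``skips'' a class-$q$ cycle, never hits the same side twice in a row, and terminates neither too early nor too late — and this requires a careful local analysis at each vertex of degree $3$ and $6$ using condition (*), plus Remark~2.1 to handle the two possible orientations of each maximal path and the ambiguity $A'\in\{A,A_q\}$. The cleanest route is probably to argue that the map $j\mapsto[A,q](e_j)$ satisfies the billiard recurrence by a direct check of the two cases (the branch $e_j$ being left or right), translate ``$e_j$ is a right branch'' into ``$[A,q](e_j)>M(q)$'' via Definition~2.1, and then invoke Lemma~3.1(2) to pin down exactly when the sequence must stop. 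Everything after that is arithmetic already packaged in Lemma~3.1 and Remark~3.1.
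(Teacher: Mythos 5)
Your plan for parts (1), (2) and (4) is essentially the paper's own argument. The paper obtains (1) exactly from your two ``reflection'' identities: $[A,q](e_{2j-1})+[A,q](e_{2j})=2S^+(q)$ or $2M(q)+2S^+(q)$ (reflection at the other maximal class-$q$ path, proved via Lemma~2.1(1) applied to a third degree-$3$ vertex $C$, using that $\hat e_{2j-1},\hat e_{2j}$ meet $[C,q]$ at a common vertex on opposite sides), and $[A,q](e_{2j})+[A,q](e_{2j+1})=2M(q)$ (reflection at $[A,q]$ itself); dividing by $2M(q)$ gives the billiard recurrence with $\theta=S^+(q)/M(q)$ and $F(1)=0$. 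For (2) the paper makes precise what you only gesture at: the terminal vertex $A_{q+1}$ of the maximal path $[A,q+1]$ is a degree-$3$ vertex, hence one of $A_q$, $C$, $C_q$, so $[A,q](e_n)\in\{M(q),S^+(q),M(q)+S^+(q)\}$ while no earlier $e_j$ has a value in this set; Lemma~3.1(2) then pins down $n=M(q)/|S^+(q),M(q)|$. Part (4) is the same order count in both treatments. (The degenerate case $n=1$, i.e.\ $S^+(q)=0$, is handled separately in the paper and is missing from your plan, but it is immediate.)

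The genuine gap is your argument for (3). You pass from ``the attachments of $[A,q+1]$ on $[A,q]$ sit at positions $0,2d,\dots,2M(q)-2d$'' to ``$K(q+1)=d$'' by asserting that rotating the drawing by $2d$ is an ``automorphism-like shift'' counting the class-$(q+1)$ cycles between the two maximal class-$(q+1)$ paths. This is not justified and is not even well posed: $[A,q]$ is a path, not a cycle, the graph $P$ need not admit any nontrivial symmetry, and nothing in your sketch explains why the spacing of these attachment points should equal twice the number of intermediate class-$(q+1)$ layers (to make such an argument work you would have to track the class-$(q+1)$ layer index along $[A,q]$ between consecutive attachments, show it rises to exactly $d$, and show the outermost layer is actually attained --- none of which is sketched). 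The paper replaces all of this by a one-line count that you already have the ingredients for: each of the $n$ adjacency pairs corresponds to a crossing segment of $[A,q+1]$ of length $K(q)$ between the two maximal class-$q$ paths, so $M(q+1)=nK(q)$; then by (2) and (4),
\[
K(q+1)\,M(q+1)=K(q)\,M(q)=d\,n\,K(q)=d\,M(q+1),
\]
hence $K(q+1)=d=|S^+(q),M(q)|$. You should replace your part-(3) argument with this computation.
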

\begin{proof}
Since $2K(q) M(q) +2$ is the order of $P$, condition (4) holds.

If $n =1$, then $S^+ (q) = 0$, $M(q+1) = K(q)$ and, by (4), $K(q+1) = M(q)$.
Hence, conditions (2) and (3) are satisfied.

Let $n > 1$. Let $C$ be a vertex of degree $3$, $C \neq A$, $C \neq A_q$, and suppose that $f$ is the first edge of the path $[C,q+1]$ which is adjacent to the path $[A,q]$. Without loss of generality we can assume, by Remark 2.1, that $f$ is a left branch of $[A,q]$. Hence, $[A, q](f) =  S^+ (q)$. Suppose that $\hat{e}_1$, $\hat{e}_2$, \ldots , $\hat{e}_n$ is a sequence of all consecutive edges of the path $[A, q+1]$ which are adjacent to the path $[C, q]$. Note that the edges $\hat{e}_{2j-1}$, $\hat{e}_{2j}$ are
incident with the same vertex of the path $[C, q]$ and that they are on the opposite sides of this path. Hence, we get
$$
[C, q] (\hat{e}_{2j-1} ) + [C, q] (\hat{e}_{2j} ) = 2M(q).
$$
By Lemma~2.1(1), we have
$$[A, q](e_j) + [C, q ](\hat{e}_j) \equiv [A, q](f) \equiv S^+ (q) \pmod{2M(q)}, \quad \hbox{for} \ 1 \leqslant  j \leqslant n.$$
Hence, we obtain
\begin{eqnarray*}
[A, q](e_{2j-1}) + [A, q](e_{2j} ) \equiv 2 S^+ (q) \pmod{2M(q)}, \ 2\leqslant 2j \leqslant n.
\end{eqnarray*}
Since $0 \leqslant [A, q] (e_{2j-1}) + [A, q] (e_{2j}) < 4M(q)$ and $0 < S^+ (q) < M(q)$ we get
\\[6pt]
$
\begin{array}{ll}
{\rm(i)} \quad [A, q](e_{2j-1}) + [A, q](e_{2j} ) =
2S^+ (q)\ \ \hbox{or} \ \ 2M(q) + 2S^+ (q), \ 2\leqslant 2j \leqslant n.
\end{array}
$
\\[6pt]
By analogy, the edges $e_{2j}$, $e_{2j+1}$ are incident with the same vertex of the path $[A, q]$ and, therefore, they are on the opposite sides of this path. Hence, we have
\\[6pt]
$
\begin{array}{ll}
{\rm(ii)} \quad [A, q](e_{2j}) + [A, q](e_{2j+1} ) = 2M(q), \  \ 2\leqslant 2j \leqslant n-1 .
\end{array}
$
\\[6pt]
By (i) and (ii) we obtain (1).

By definition of $A_q$, $C$ and $C_q$, we have
\begin{eqnarray*}
A_{q+1} = A_q  \hbox{ and } j = n &\Leftrightarrow& [A,q](e_j)= M(q),
\\[4pt]
A_{q+1} = C \hbox{ and } j = n &\Leftrightarrow & [C,q](\hat{e}_j) = 0  \Leftrightarrow [A,q](e_j) = S^+(q),
\\[4pt]
A_{q+1} = C_q  \hbox{ and } j = n &\Leftrightarrow & [C,q](\hat{e}_j) = M(q) \Leftrightarrow [A,q](e_j) = M(q) + S^+(q).
\end{eqnarray*}
Accordingly,
\\[6pt]
$
\begin{array}{ll}
{\rm(iii)} & \quad [A, q](e_n) \in \{ M(q), S^+ (q), M(q) + S^+ (q)\}, \quad \hbox{and}
\\[4pt]
 & \quad [A, q](e_j) \notin \{ M(q), S^+ (q), M(q) + S^+ (q)\}, \quad \hbox{for} \ j < n.
\end{array}
$
\\[6pt]
By (1) and Lemma~3.1(2), condition (iii) leads to $n = \afrac{M(q)}{|S^+ (q), M(q)|}$.

Since $n = \afrac{M(q)}{|S^+ (q), M(q)|}$ condition (4) shows that
$$
M(q+1) = n K(q) = \afrac{M(q) K(q)}{|S^+ (q), M(q)|} = \afrac{M(q+1) K(q+1)}{|S^+ (q), M(q)|} .
$$
Thus $K(q+1) = |S^+ (q), M(q)|$ and condition (3) holds.
\qed
\end{proof}
By analogy, we obtain the following corollary:
\begin{corollary}\label{corollary3.1}
Let $A$ be a vertex of degree $3$ in $P$, and suppose that $e_1$, $e_2$, \ldots, $e_n$ is a sequence of all consecutive edges of the path $[A, q-1] $ which are adjacent to the path $[A, q]$.
\\[6pt]
$
\begin{array}{ll}
{\rm (1)} \ \hbox{If} \ n > 1 , \ \hbox{then}
\end{array}
$
$$
[A, q](e_j) = 2M(q)F(j), \hbox{ for } 1 \leqslant j \leqslant n,
 $$
\indent
where $F(j)$, $j \in \mathbb{N}$, is  the $\afrac{S^- (q)}{M(q)}$-billiard sequence,
\\ [6pt]
$
\begin{array}{lll}
{\rm (2)}& n = \afrac{M(q)}{|S^- (q), M(q)|},
\\ [6pt]
{\rm (3)}& K(q-1) = |S^- (q), M(q)|.
\end{array}
$
\end{corollary}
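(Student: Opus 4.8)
The plan is to mirror the proof of Theorem~\ref{theo3.1} verbatim, exchanging the role of the class $q+1$ with the class $q-1$ and, correspondingly, the role of $S^+(q)$ with $S^-(q)$. The point is that the whole argument in Theorem~\ref{theo3.1} used the path $[A,q+1]$ only through the fact that its first edge adjacent to $[A,q]$ determines the offset $S^+(q)$ (via Definition~\ref{def2.2}), together with the combinatorial relation of Lemma~\ref{lem2.1}(1), which is symmetric in the two neighbouring classes. Since Definition~\ref{def2.2} treats $[C,q+1]$ and $[C,q-1]$ on exactly the same footing — the first edge of $[C,q-1]$ adjacent to $[A,q]$ defines $S^-(q)$ in the same way — every step goes through with $S^+$ replaced by $S^-$.

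Concretely, I would proceed as follows. First dispose of the degenerate case $n=1$: here $S^-(q)=0$, and since the path $[A,q-1]$ reaches the other maximal path of class $q$ in a single step, $M(q-1)=K(q)$, whence by Theorem~\ref{theo3.1}(4) (applied with $q-1$ in place of $q$, so that $K(q-1)M(q-1)=K(q)M(q)$) one gets $K(q-1)=M(q)=|S^-(q),M(q)|$; thus (2) and (3) hold. For $n>1$, pick a vertex $C$ of degree $3$ distinct from $A$ and $A_q$, and let $g$ be the first edge of $[C,q-1]$ adjacent to $[A,q]$; by Remark~\ref{rem2.1} we may assume $g$ is a left branch, so $[A,q](g)=S^-(q)$. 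Letting $\hat e_1,\dots,\hat e_n$ be the consecutive edges of $[A,q-1]$ adjacent to $[C,q]$, the two relations $[C,q](\hat e_{2j-1})+[C,q](\hat e_{2j})=2M(q)$ (opposite sides, same vertex) and $[A,q](e_j)+[C,q](\hat e_j)\equiv S^-(q)\pmod{2M(q)}$ from Lemma~\ref{lem2.1}(1) combine exactly as in (i) and (ii) of the theorem's proof to yield, with the $\afrac{S^-(q)}{M(q)}$-billiard sequence $F$, the identity $[A,q](e_j)=2M(q)F(j)$, which is (1). Then the terminal-vertex analysis — $A_{q-1}$ equals $A_q$, $C$, or $C_q$ according as $[A,q](e_n)$ equals $M(q)$, $S^-(q)$, or $M(q)+S^-(q)$ — gives condition (iii), and Lemma~\ref{lem3.1}(2) forces $n=\afrac{M(q)}{|S^-(q),M(q)|}$, which is (2). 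Finally, $M(q-1)=nK(q)$ together with $K(q-1)M(q-1)=K(q)M(q)$ gives $K(q-1)=|S^-(q),M(q)|$, which is (3).

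I do not expect any genuine obstacle here: the only thing to be careful about is the orientation bookkeeping, namely that traversing $[A,q]$ with its chosen orientation the class-$(q-1)$ branches sit to the side opposite to the class-$(q+1)$ branches, so that "left'' and "right'' in Definition~\ref{def2.2} are applied consistently — but this is precisely what condition (*) guarantees and what Definition~\ref{def2.2} was set up to handle, so the computation of $[A,q](g)$ is literally the same as the computation of $[A,q](f)$ with $q-1$ in place of $q+1$. Hence the corollary follows "by analogy,'' as stated.
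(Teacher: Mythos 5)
Your proposal is correct and coincides with the paper's intended argument: the paper gives no separate proof of this corollary, deriving it ``by analogy'' from Theorem~\ref{theo3.1} exactly as you do, with the same use of Lemma~\ref{lem2.1}(1), the same parity relations for the touching edges, and the degenerate case settled via $M(q-1)=K(q)$ together with the order identity $K(q-1)M(q-1)=K(q)M(q)$. The only nitpick is that when $n=1$ the paper's convention $0<S^-(q)\leqslant M(q)$ gives $S^-(q)=M(q)$ rather than $S^-(q)=0$ (the first edge of $[C,q-1]$ meeting $[A,q]$ is then a branch at an end of that path), which is harmless here since $|M(q),M(q)|=M(q)=|0,M(q)|$, so conclusions (2) and (3) are unaffected.
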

\begin{theorem}\label{theo3.2}
Let $A$ be a vertex of degree $3$ in $P$, and suppose that $a$, $b$ are natural numbers such that $a M(q) - bS^+ (q) = d$ and  $b \leqslant \afrac{M(q)}{d}$, where $d = |S^+ (q), M(q)|$. Then we have:
\\[6pt]
$
\begin{array}{ll}
{\rm (1)} & S^- (q+1) = b  K(q),
\\ [6pt]
{\rm (2)} & S^+ (q+1) \equiv b K(q) - K (q+1) \pmod{ M(q+1)}.
\\[6pt]
\end{array}
$
\end{theorem}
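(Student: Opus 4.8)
The plan is to deduce part (2) from part (1) at once, and to prove part (1) by identifying, inside the $q$-drawing of $P$, the edge that the definition of $S^-(q+1)$ singles out and computing its position along $[A,q+1]$ with the billiard description of Theorem~\ref{theo3.1} and the Farey-neighbour dictionary of Remark~\ref{rem3.1}. For the reduction, apply Theorem~\ref{theo2.1} at index $q+1$, which gives $S^-(q+1)-S^+(q+1)\equiv K(q+1)\pmod{M(q+1)}$; granting part (1), this yields
\[
S^+(q+1)\equiv S^-(q+1)-K(q+1)=b\,K(q)-K(q+1)\pmod{M(q+1)},
\]
which is part (2). So the whole content is part (1).

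First I would set the stage. From parts (2)--(4) of Theorem~\ref{theo3.1}, $K(q+1)=d$ and $M(q+1)=n\,K(q)$ with $n=\afrac{M(q)}{d}$, so that $0<b\,K(q)\leqslant M(q+1)$, the correct range for $S^-(q+1)$. By Remark~\ref{rem3.1}, the hypotheses $aM(q)-bS^+(q)=d$ and $b\leqslant\afrac{M(q)}{d}$ say precisely that $\afrac{a}{b}$ is the Farey successor, in the Farey sequence of order $\afrac{M(q)}{d}$, of the fraction $\afrac{S^+(q)}{M(q)}$ in lowest terms. Next, Definition~\ref{def2.2} at index $q+1$ describes $S^-(q+1)$ as $[A,q+1](g)$, corrected by $-M(q+1)$ if $g$ is a right branch, where $g$ is the first edge of a maximal class-$q$ path $[C,q]$ adjacent to $[A,q+1]$ and $C$ is an end of the maximal class-$(q+1)$ path other than $[A,q+1]$; by the remark following Definition~\ref{def2.2} the choice of $C$ is immaterial, so one may take $[C,q]$ to be as convenient as possible.

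Then comes the core step: locating $g$ and evaluating $[A,q+1](g)$. Running Theorem~\ref{theo3.1}(1) along $[A,q+1]$, its edges $e_1,\dots,e_n$ adjacent to $[A,q]$ satisfy $[A,q](e_j)=2M(q)F(j)$ for the $\afrac{S^+(q)}{M(q)}$-billiard sequence $F$; by Lemma~\ref{lem3.1}(2) the terminal value $[A,q](e_n)\in\{S^+(q),M(q),S^+(q)+M(q)\}$ tells which degree-$3$ vertex is $A_{q+1}$; and by Lemma~\ref{lem3.1}(3) the Farey index $b$ is exactly the one at which $[A,q](e_b)$ reaches the position neighbouring $[A,q](e_n)$ at distance $d=K(q+1)$ (or the position $0$, when $a=b=1$). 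I would match this against what Corollary~\ref{corollary3.1} at index $q+1$ says about $[A,q+1]$ --- namely, through Lemma~\ref{lem3.1}(1) for the $\afrac{S^-(q+1)}{M(q+1)}$-billiard sequence and the fact that $|S^-(q+1),M(q+1)|=K(q)$, the edges of $[A,q]$ adjacent to $[A,q+1]$ sit at distances from $A$ that run through a step-$2K(q)$ progression --- so as to pin down the vertex of $[A,q+1]$ at which $g$ enters; the left/right-branch bookkeeping of Definition~\ref{def2.2} then collapses $[A,q+1](g)$ to give $S^-(q+1)=b\,K(q)$, which is part (1).

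The main obstacle is exactly this geometric identification: proving that the first class-$q$ edge of $[C,q]$ to reach $[A,q+1]$ does so at the place distinguished by the Farey successor $\afrac{a}{b}$, and that its position reduces to $b\,K(q)$ once the left/right-branch conventions are accounted for. This will require a uniform treatment of the three cases of Lemma~\ref{lem3.1}(2) --- corresponding to whether $A_{q+1}$ is $A_q$ or one of the other two degree-$3$ vertices, which also governs the most convenient choice of $C$ --- together with careful use of Definitions~\ref{def2.1}--\ref{def2.2}; once that is settled, what remains is the modular bookkeeping already packaged in Theorems~\ref{theo3.1} and \ref{theo2.1} and Lemma~\ref{lem3.1}.
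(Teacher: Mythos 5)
Your reduction of (2) to (1) via Theorem~\ref{theo2.1} at index $q+1$ is exactly the paper's, and your overall strategy for (1) --- read the positions $[A,q](e_j)=2M(q)F(j)$ along $[A,q+1]$ from Theorem~\ref{theo3.1}(1), invoke Lemma~\ref{lem3.1}(3) at the Farey index $b$, and identify the edge $g$ of Definition~\ref{def2.2} (at index $q+1$) entering $[A,q+1]$ at distance $bK(q)$ from $A$ --- is also the paper's. But the decisive step is missing: you yourself flag ``the main obstacle is exactly this geometric identification'' and defer it. That identification is the actual content of the theorem, so as it stands the proposal is a roadmap rather than a proof. Concretely, what has to be shown (and what the paper shows) is: (a) via Lemma~\ref{lem3.1}(3), Lemma~\ref{lem2.1}(1) and Remark~\ref{rem2.1}, that in the three parity cases for $a,b$ one has $[A_q,q](e_b)=2M(q)-d$ ($b$ even), $[C,q](\hat e_b)=2M(q)-d$ ($a$ even) or $[C_q,q](\hat e_b)=2M(q)-d$ ($a,b$ odd, $b\neq 1$), so that $e_b$ resp.\ $\hat e_b$ is a \emph{right} branch of the relevant class-$q$ path at distance $d$ from its initial vertex; (b) via the mod-$2d$ congruences coming from Lemma~\ref{lem3.1}(1) and Lemma~\ref{lem2.1}(1), that the \emph{first} edge $g$ of that path adjacent to $[A,q+1]$ has its head at exactly $E_b$ (resp.\ $\hat E_b$) and is a \emph{left} branch of $[A,q+1]$, so no $-M(q+1)$ correction occurs; and (c) the length bookkeeping $|AE_b|=bK(q)$ for $b$ even, $|A\hat E_b|=bK(q)$ for $b$ odd, using $E_j=E_{j+1}$ for $j$ even, $\hat E_j=\hat E_{j+1}$ for $j$ odd, and $|E_j\hat E_j|=K(q)$. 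None of (a)--(c) is carried out in your text.

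Two further points. First, your suggestion to pin down the entry vertex of $g$ by applying Corollary~\ref{corollary3.1} at index $q+1$ only yields that adjacencies occur at multiples of $2K(q)$ along a path of class $q+1$; it cannot by itself single out the $b$-th such vertex, so it does not replace step (b). Second, the case of Lemma~\ref{lem3.1}(3) that applies is governed by the parities of $a$ and $b$, not by the case of Lemma~\ref{lem3.1}(2) that identifies $A_{q+1}$; these two case splittings must be reconciled when choosing which class-$q$ path ($[A_q,q]$, $[C,q]$ or $[C_q,q]$) plays the role of $[C',q]$ in Definition~\ref{def2.2}, and the degenerate case $n=1$ (i.e.\ $S^+(q)=0$, $a=b=1$) needs a separate one-line argument, which your plan omits since Theorem~\ref{theo3.1}(1) assumes $n>1$.
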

\begin{proof}
 Suppose that $ e_1$, $e_2$, \ldots, $e_n $ is a sequence of all consecutive edges of the path $[A, q+1]$ which are adjacent to the path $[A, q]$ in vertices $A = E_1$, $E_2$, \ldots, $E_n$, respectively.

 If $n = 1$, then  $A$  is the only common vertex of paths $[A,q+1]$ and $[A, q]$. Hence,
$S^+ (q)~=~0$ and $S^- (q+1) = M(q+1) = K(q)$. Then $a = b = 1$, and condition (1) holds.

Let $n > 1$. Let $C$ be a vertex of degree $3$, $C \neq A$, $C \neq A_q$, and suppose that $f$ is the first edge of the path $[C,q+1]$ which is adjacent to the path $[A,q]$. Without loss of generality we can assume, by Remark 2.1, that $f$ is a left branch of $[A,q]$. Hence, $[A, q](f) =  S^+ (q)$.
Suppose that $\hat{e}_1 $, $\hat{e}_2 $, \ldots, $\hat{e}_n$ is a sequence of all consecutive edges of the path $[A, q+1]$
  which are adjacent to the path $[C, q]$ in vertices $\hat{E}_1$, $\hat{E}_2$, \ldots, $\hat{E}_n$, respectively. Note that  $E_j = E_{j+1}$ for $j$ even, $\hat{E}_j = \hat{E}_{j+1}$ for $j$ odd, and a segment $E_j \hat{E}_j$
  of the path $[A,q+1]$ has the length $|E_j \hat{E}_j | = K(q)$. Hence, segments $A E_b$ and $A \hat{E}_b$ of the path  $[A,q+1]$ have the lengths:
\\[6pt]
 (i) $\left\{
 \begin{array}{ll}
 |A E_b | = b  K(q),  & \mbox{for $b$ even} ,
   \\[4pt]
 |A \hat{E}_b | =  b  K(q),  & \mbox{for $b$ odd}.
   \end{array}
   \right.
 $
\\[6pt]
By Remark ~2.1 and Lemma~2.1(1), we have
$$
\begin{array}{l}
[A_q, q] (e_j ) - [A_q, q] (e_i)
\equiv [A, q] (e_j ) - [A, q] (e_i) \equiv [C, q](\hat{e}_i) - [C, q](\hat{e}_j )
\\[4pt]
\qquad\equiv
[C_q, q](\hat{e}_i) - [C_q, q](\hat{e}_j ) \pmod{ 2M(q)},
 \hbox{ for } \ 1 \leqslant i,  j \leqslant n.
\end{array}
$$
From Theorem~3.1(2) it follows that $n = \afrac{M(q)}{d}$. Hence, by Lemma~3.1(1), we obtain
\\[6pt]
(ii) \quad $\begin{array}{l}
[A_q, q] (e_j ) - [A_q, q] (e_i) \equiv [C, q](\hat{e}_i) - [C, q](\hat{e}_j )
\\[4pt]
\qquad\equiv
[C_q, q](\hat{e}_i) - [C_q, q](\hat{e}_j ) \equiv 0  \pmod{2d},
 \hbox{ for } \  1 \leqslant i,  j \leqslant n.
\end{array}
$
\\[6pt]
By Lemma 3.1(3) we get
 \\[6pt]
$[A, q](e_b) = \left\{\begin{array}{lll}
S^+(q) + d, & \hbox{for} \  a \ \hbox{even}  ,
\\[4pt]
M(q) - d, & \hbox{for} \ b \ \hbox{even}  ,
\\[4pt]
S^+(q) + M(q) + d, & \hbox{for} \  a \ \hbox{and} \ b \neq 1 \ \hbox{both odd} .
\end{array}\right.
$
\\[6pt]
 Since $[A, q](f) =  S^+ (q)$,  Lemma~2.1(1) shows that
$$
[C,q] (\hat{e_b}) \equiv [A, q ] (f) - [A, q ] (e_b) \equiv S^+ (q) - [A, q ] (e_b) \pmod {2M(q)}.
$$
Accordingly, by Remark~2.1, we obtain
 \\[6pt]
  (iii)\quad
 $\left\{
 \begin{array}{ll}
 [C, q] (\hat{e}_b)  =  2M(q) - d, &  \hbox{for} \ a \ \hbox{even},
 \\[4pt]
 [A_q,q](e_b) =   2M(q) - d, & \hbox{for} \  b \ \hbox{even},
 \\[4pt]
 [C_q , q](\hat{e}_b)  =  2M(q) - d, &  \hbox{for} \ a \ \hbox{and} \ b \neq 1 \ \hbox{both odd}.
 \end{array}
 \right.$
 \\[6pt]
Hence, for $b$ even ($b \neq 1$ odd) $e_b$ ($\hat{e}_b$, respectively) is a right branch of the path $[A_q, q]$ ($[C,q]$, or $[C_q ,q]$, respectively). For $b$ even ($b \neq 1$ odd), suppose that $g$ is the first edge of the directed path $[A_q, q]$ ($[C,q]$, or $[C_q ,q]$) which is adjacent to the directed path $[A, q+1]$.  By (ii)--(iii), $E_b$ ($\hat{E}_b$, respectively) is the common head of the arcs $g$ and $e_b$ ($\hat{e}_b$, respectively). Hence, $g$ is a left branch of the path $[A, q+1]$. Thus, by~(i), $S^- (q+1) = [A, q+1](g) = b K(q)$, and condition~(1) holds.

Condition (2) follows from (1) and Theorem~2.1.
\qed
\end{proof}

\begin{example}\label{exam3.1}
Let $\{ a_j \}$ be the Fibonacci sequence:
$$
a_1 = a_2 = 1 \quad \hbox{and} \quad a_{j+2} = a_j + a_{j+1}  \quad
 \hbox{for} \ \ j \in N .
$$
We will check that
$$
\{(1, a_{2n+1}a_{2n+2}, a_{2n}a_{2n+2}), (a_{2n+2}, a_{2n+1}, 0), (a_{2n+1}, a_{2n+2}, a_{2n})\}
$$
is the orbit of a graph in $\cal{P}$. Notice that for $n=1$ we obtain the orbit
$$\{(1,6,3), (3,2,0), (2,3,1)\}.$$
\end{example}
\begin{proof}
Since $a_j/a_{j+1}$ is the $j$-th convergence to $(\sqrt{5}-1)/{2}$, $j \in \mathbb{N}$, we have the following conditions (see Schmidt \cite[Lemma~3C,~3D]{flo9}):
\\[6pt]
(1) $a^2_{j+1} - a_j a_{j+2} = (-1)^j$,
\\[6pt]
(2) $a_{j+3} a_j - a_{j+2} a_{j+1} = (-1)^{j+1}$.
\\[6pt]
If $(K(1), M(1), S^+(1)) = (1, a_{2n+1} a_{2n+2}, a_{2n} a_{2n+2})$ then, by (1),
$$
a_{2n-1} M(1) - a_{2n} S^+ (1) = a_{2n+2}.
$$
 Hence, by Theorem~3.1(3-4) we have
$$
K(2) = a_{2n+2},\ M(2) = a_{2n+1},
$$
and, by Theorem~3.2(2),
$$
S^+(2) \equiv a_{2n} K(1) - K(2) = a_{2n} - a_{2n+2} = -a_{2n+1} \equiv 0 \pmod{  a_{2n+1}}.
$$
If $(K(2), M(2), S^+(2)) = (a_{2n+2}, a_{2n+1}, 0)$, then $M(2) - S^+ (2) = a_{2n+1}$. Hence, by Theorem~3.1(3--4) we obtain
$$
K(3) = a_{2n+1},\ M(3) = a_{2n+2},
$$
and, by  Theorem~3.2(2),
$$
S^+(3) \equiv K(2) - K(3) =  a_{2n+2} - a_{2n+1} = a_{2n} \pmod{ a_{2n+2}}.
$$
If $(K(3), M(3), S^+ (3)) = (a_{2n+1}, a_{2n+2}, a_{2n})$, then, by (2),
$$
a_{2n-1} M(3) - a_{2n+1} S^+ (3) = 1.
$$
Hence, by Theorem~3.1(3--4) we have
$$
K(1) = 1,\ M(1) = a_{2n+1} a_{2n+2},
$$
and, by Theorem~3.2(2) and (1),
$$
S^+(1) \equiv a_{2n+1}  K(3) - K(1) =  a^2_{2n+1} -1 = a_{2n} a_{2n+2} \pmod{ a_{2n+1} a_{2n+2}}.
$$
\qed
\end{proof}

\section{One point orbits of graphs in $\cal{P}$}
We recall that  $X = \{ (k,m,s) \in \mathbb{\mathbb{Z}}^3 :  1\leqslant k\, , 1 \leqslant m\, , 0 \leqslant s < m \}$ is the set of all index-vectors of graphs in $\cal{P}$. In the following theorem we characterize one point orbits.
\begin{theorem}\label{theo4.1}
$\{(k,m,s) \}\in X$ is a one point orbit of a graph in $\cal{P}$ if and only if $ m = kn$, $s = kx$, where $0 \leqslant x < n$ are integers such that $n$ is a divisor of $x^2 + x + 1$.
\end{theorem}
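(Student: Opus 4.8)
The plan is to restate everything in terms of the map $T$ that carries a $q$-index-vector of any $P\in\cal P$ to its $(q+1)$-index-vector. By Theorem~\ref{theo3.1}(3),(4) and Theorem~\ref{theo3.2}(2) this $T$ is a well-defined self-map of $X$: it sends $(k,m,s)$ to $(k',m',s')$, where $k'=|s,m|$, $m'=km/k'$, and $s'$ is the unique element of $\{0,1,\dots,m'-1\}$ congruent to $bk-k'$ modulo $m'$, with $a,b$ the natural numbers satisfying $am-bs=k'$ and $b\le m/k'$ (these exist by Remark~\ref{rem3.1}). Applying $T$ three times to an index-vector of $P$ returns it, since $q\mapsto q+1$ has order $3$ in $Q$; hence the orbit of $P$ is $\{v,Tv,T^2v\}$ for any index-vector $v$ of $P$, and this set is a single point precisely when $Tv=v$. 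As every element of $X$ is an index-vector of some graph in $\cal P$, it remains to show that $Tv=v$ for $v=(k,m,s)\in X$ if and only if $m=kn$, $s=kx$ with $0\le x<n$ and $n\mid x^2+x+1$.

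For the forward direction, assume $T(k,m,s)=(k,m,s)$. Then $k=|s,m|$, so we may write $m=kn$, $s=kx$ with $\gcd(x,n)=1$, and $0\le s<m$ forces $0\le x<n$. Let $a,b$ be the natural numbers with $am-bs=k$ and $b\le m/k=n$; dividing by $k$ this is the B\'ezout relation $an-bx=1$ with $1\le b\le n$. Theorem~\ref{theo3.2}(2) now reads $s\equiv bk-k\pmod m$; dividing by $k$ and reducing modulo $n$ gives $x\equiv b-1\pmod n$, and since $x$ and $b-1$ both lie in $\{0,1,\dots,n-1\}$ we conclude $b=x+1$. Substituting back, $an=1+bx=x^2+x+1$, so $n\mid x^2+x+1$.

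For the converse, suppose $m=kn$, $s=kx$ with $0\le x<n$ and $n\mid x^2+x+1$. Then $(k,m,s)\in X$, and $\gcd(x,n)=1$ since a common prime divisor of $x$ and $n$ would divide $x^2+x+1$ and hence $1$. Set $b=x+1$ and $a=(x^2+x+1)/n$; these are natural numbers with $am-bs=k(an-bx)=k$ and $b=x+1\le n=m/k$. Then Theorem~\ref{theo3.1}(3),(4) give the first two coordinates of $T(k,m,s)$ as $|s,m|=k$ and $km/k=m$, while Theorem~\ref{theo3.2}(2) gives the third as the representative in $\{0,\dots,m-1\}$ of $bk-k=kx=s$, which is $s$ itself. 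Hence $T(k,m,s)=(k,m,s)$, and by the first paragraph $(k,m,s)$ is a one point orbit of every graph in $\cal P$ having it as an index-vector.

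The argument is essentially mechanical once it is set up; the only points deserving care are the step from $x\equiv b-1\pmod n$ to $b=x+1$ (and, dually, the claim that the $b$ chosen in the converse meets the bound $b\le m/k$ of Theorem~\ref{theo3.2}), both of which come down to confining $b$ to $\{1,\dots,n\}$, together with the degenerate sub-case $s=0$, which forces $x=0$, $n=1$ and the orbit $\{(k,k,0)\}$ and in which one should check that Theorem~\ref{theo3.2} still applies, with $a=b=1$.
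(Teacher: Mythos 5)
Your argument is correct and is essentially the paper's own proof: the paper likewise reduces the one-point-orbit condition to the fixed-point equations $k=|s,m|$ and $s\equiv bk-k\pmod m$ via Theorem~\ref{theo3.1}(3--4) and Theorem~\ref{theo3.2}(2), and then performs the same substitution $m=kn$, $s=kx$, $b=x+1$ to arrive at $n\mid x^2+x+1$. You merely spell out (carefully and correctly) the uniqueness of $b$, the bound $b\le m/k$, and the degenerate case $s=0$, which the paper compresses into a chain of equivalences labelled ``easy to prove.''
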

\begin{proof}
Let $(k,m,s)$ be an index-vector of a graph in $\cal{P}$. It is easy to prove that the following conditions are equivalent (equivalence (ii)--(iii) follows from Theorem 3.1(3--4) and Theorem 3.2(2)):
\\[6pt]
(i) $\{(k,m,s) \}$ is a one point orbit,
\\[6pt]
(ii) $(k,m,s) = (K(q), M(q), S^+ (q)) = (K(q+1), M(q+1), S^+ (q+1))$,
\\[6pt]
(iii) $k = |s, m|$ and $s = bk - k$, where $b$ is an integer such that $0 < b \leqslant \afrac{m}{k}$ and  $bs \equiv -k  \pmod{m}$,
\\[6pt]
(iv) $m = kn$,  $s = kx = bk - k$, where $n \geqslant 1$, $x \geqslant 0$ and $0 < b \leqslant n$ are integers such that  $bx \equiv -1  \pmod{n}$,
\\[6pt]
(v)  $m=kn$,  $s=kx$, where $0\leqslant x < n$ are integers such that
 $n$ is a divisor of $x^2 + x + 1$.
\qed
\end{proof}
\begin{remark}\label{rem4.1}
Notice that if $\{(k,m,s)\}$ is a one point orbit of a graph $G\in{\cal P}$, then, by Theorem \1{2.1},
 $\{(k,m,m-s-k)\}$ is the one point orbit of the mirror reflection of $G$. Hence, by Theorem \1{4.1}, $n$ is a divisor of $x^2 + x + 1$ if and only if $n$ is a divisor of $(n-x-1)^2 + (n-x-1) + 1$, which is confirmed by the following equivalence
$$
x^2 + x + 1 = an \ \Leftrightarrow \ (n-x-1)^2 + (n-x-1) + 1 = (n-2x-1+a)n.
$$
 \end{remark}

\begin{example}\label{exam4.1}
Notice that $(a,n,x) = (1,1,0), (1,3,1), (1,7,2)$ and $(1,13,3)$ are all integral solutions of the diophantine equation $$
x^2 + x + 1 = a\cdot{n},
\quad \mbox{for \/ $0\leqslant x\leqslant3$ \/ and \/  $x < n$}.
$$
Hence, by Theorem \1{4.1},
$\{(k,k,0)\}$, for $k \in \mathbb{N}$, $\{(1,3,1)\}$, $\{(1,7,2)\}$ and $\{(1,13,3)\}$ are all one point orbits with $s \leqslant 3$. Notice that $K^4$ (tetrahedron) has the one point orbit $\{(1,1,0)\}$. Let $G_0$, $G_1$, $G_2$ and $G_3$ be graphs in $\cal P$ with one point orbits
$$
 \{(4,4,0)\}, \{(1,3,1)\}, \{(1,7,2)\}
\mbox{ and }\{(1,13,3)\},
$$
 respectively. Let us consider a solid regular tetrahedron with closed $3$-faces $f_1$, $f_2$, $f_4$, $f_4$. It is easy to check that $G_j$, $j= 0, 1, 2, 3$, can be embedded on the sphere of the solid regular tetrahedron in such a way that all four induced plane graphs
$G_j[{V_j}\cap{f_1}]$, \ldots, $G_j[{V_j}\cap{f_4}]$ are op-equivalent
 to the plane graph $Q_j$ shown in\/ \1{Fig.~3}.

We \textbf{conjecture} that each graph $G\in{\cal P}$ with one point orbit, and the vertex set $V$, can be embedded on the sphere of the solid regular tetrahedron in such a way that all four induced plane graphs $G[{V}\cap{f_1}]$, \ldots, $G[{V}\cap{f_4}]$ are op-equivalent.
\begin{figure}
\centering
\begin{tabular}{c}
\begin{tikzpicture}[scale=1]

\coordinate  (A) at (0,0);
\coordinate  (B) at (4,0);
\draw (A) -- (B);
\coordinate (D) at
($ (A) ! .5 ! (B) ! {sin(60)*2} ! 90:(B) $) {};
\draw (A) -- (D) -- (B);

\draw ($(A)!0.25!(D)$) -- ($(B)!0.25!(D)$);
\draw ($(A)!0.5!(D)$) -- ($(B)!0.5!(D)$);
\draw ($(A)!0.75!(D)$) -- ($(B)!0.75!(D)$);

\draw ($(A)!0.25!(D)$) -- ($(A)!0.25!(B)$);
\draw ($(A)!0.5!(D)$) -- ($(A)!0.5!(B)$);
\draw ($(A)!0.75!(D)$) -- ($(A)!0.75!(B)$);

\draw ($(B)!0.75!(D)$) -- ($(A)!0.25!(B)$);
\draw ($(B)!0.5!(D)$) -- ($(A)!0.5!(B)$);
\draw ($(B)!0.25!(D)$) -- ($(A)!0.75!(B)$);


\end{tikzpicture}\\
graph $Q_0$
\end{tabular}
\\[12pt]
\centering
\begin{tabular}{ccc}
\begin{tikzpicture}[scale=1]

\coordinate  (A) at (0,0);
\coordinate  (B) at (4,0);
\draw (A) -- (B);
\coordinate (D) at
($ (A) ! .5 ! (B) ! {sin(60)*2} ! 90:(B) $) {};
\draw (A) -- (D) -- (B);
\coordinate (E) at (intersection of 0,0--30:2
and 2,0--2,1); 
\draw (A)--(E);
\draw (B)--(E);
\draw (D)--(E);

\end{tikzpicture}&

%

\begin{tikzpicture}[scale=1.368] 

\coordinate  (A) at (0,0);
\coordinate  (B) at (3,0);
\draw (A) -- (B);
\coordinate (D) at
($ (A) ! .5 ! (B) ! {sin(60)*2} ! 90:(B) $) {};
\draw (A) -- (D) -- (B);

\draw ($(A)!0.6667!(D)$) -- ($(B)!0.6667!(D)$);
\draw ($(D)!0.6667!(A)$) -- ($(B)!0.6667!(A)$);
\draw ($(A)!0.6667!(B)$) -- ($(D)!0.6667!(B)$);

\draw ($(A)!0.6667!(B)$) -- ($(B)!0.6667!(D)$);
\draw ($(B)!0.6667!(D)$) -- ($(D)!0.6667!(A)$);
\draw ($(D)!0.6667!(A)$) -- ($(A)!0.6667!(B)$);

\end{tikzpicture}
&

%

\begin{tikzpicture}[scale=1.1]

\coordinate  (A) at (0,0);
\coordinate  (B) at (1,0);
\coordinate  (C) at ($(B)+ (30:1cm)$);
\coordinate  (D) at ($(C)+ (-30:1cm)$);
\coordinate  (E) at ($(D)+ (0:1cm)$);
\coordinate  (F) at ($(E)+ (120:1cm)$);
\coordinate  (G) at ($(F)+ (150:1cm)$);
\coordinate  (H) at ($(G)+ (90:1cm)$);
\coordinate  (I) at ($(H)+ (120:1cm)$);
\coordinate  (J) at ($(I)+ (-120:1cm)$);
\coordinate  (K) at ($(J)+ (-90:1cm)$);
\coordinate  (L) at ($(K)+ (-150:1cm)$);

\coordinate  (M) at (intersection of K--E and A--G);

\draw (B)--(C)--(D)--(F)--(G)--(H)--(J)--(K)--(L)--cycle;
\draw (B)--(D);
\draw (F)--(H);
\draw (J)--(L);
\draw (C)--(G)--(K)--cycle;

\draw (L)--(C);
\draw (D)--(G);
\draw (H)--(K);

\draw (M)--(C);
\draw (M)--(G);
\draw (M)--(K);

\end{tikzpicture}\\
graph $Q_1$ & graph $Q_2$ & graph $Q_3$
\end{tabular}
\caption{}
\end{figure}

\end{example}

\begin{theorem}\label{theo4.2}
$G \in {\cal P}$ is double mirror symmetric if and only if $G$ has a~one point orbit of the form $\{(k,k,0)\}$ or $\{(k,3k,k)\}$ for some $k\in \mathbb{N}$.
\end{theorem}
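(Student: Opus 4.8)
The plan is to translate ``double mirror symmetric'' into a purely arithmetic condition on the orbit and then to run down the list of one point orbits already available from Theorem~\ref{theo4.1}. Recall that $G$ is double mirror symmetric iff there are $q_1, q_2 \in Q$ with $S^+(q_i) = M(q_i) - S^-(q_i)$ for $i=1,2$; by Theorem~\ref{theo2.1} this reads $S^-(q_i) - S^+(q_i) \equiv K(q_i) \pmod{M(q_i)}$, so the condition $S^+(q_i) = M(q_i) - S^-(q_i)$ becomes $2S^+(q_i) \equiv -K(q_i) \pmod{M(q_i)}$, i.e. $2S^+(q_i) + K(q_i) \equiv 0 \pmod{M(q_i)}$. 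So the task is: show that $G$ admits two indices $q$ (out of the three) with $2S^+(q) + K(q) \equiv 0 \pmod{M(q)}$ if and only if the orbit is $\{(k,k,0)\}$ or $\{(k,3k,k)\}$.

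First I would handle the ``if'' direction, which is a direct check: for the orbit $\{(k,k,0)\}$ one has $M(q)=k$, $S^+(q)=0$, $K(q)=k$ at (the unique) $q$, and $2\cdot 0 + k \equiv 0 \pmod k$; since the orbit is a single point, all three $q$ work, so in particular two do. For $\{(k,3k,k)\}$ one has $M(q)=3k$, $S^+(q)=k$, $K(q)=k$, and $2k + k = 3k \equiv 0 \pmod{3k}$; again the orbit is a single point so all $q$ qualify. (One should note these two orbits are genuine one point orbits: by Theorem~\ref{theo4.1} with $n=1,x=0$ resp.\ $n=3,x=1$, since $1 \mid 1$ and $3 \mid 1^2+1+1 = 3$.)

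For the ``only if'' direction I would argue as follows. Suppose $q_1 \ne q_2$ both satisfy the mirror condition $2S^+(q) + K(q) \equiv 0 \pmod{M(q)}$. The key observation is that Theorem~\ref{theo3.1}(3) gives $K(q+1) = |S^+(q), M(q)|$, and the mirror condition forces $K(q)$ to be divisible by the same gcd in a way that propagates. Concretely, writing $d = |S^+(q_1), M(q_1)|$, from $2S^+(q_1) \equiv -K(q_1) \pmod{M(q_1)}$ we see $d \mid 2S^+(q_1)$ and $d \mid M(q_1)$, hence $d \mid K(q_1)$ — but also $K(q_1) = |S^+(q_1-1), M(q_1-1)| = K(q_2)$-type relations let one chase the three mutually-constrained vectors around the cyclic group $Q$. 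Having two of the three indices satisfy the symmetry, together with the arithmetic equations \1{(3)} and Theorem~\ref{theo3.2}(2), should pin down all three index-vectors and force the orbit to collapse to a single point; then Theorem~\ref{theo4.1} lists the one point orbits as $m = kn$, $s = kx$ with $n \mid x^2+x+1$, and imposing $2s + k \equiv 0 \pmod m$, i.e. $2kx + k \equiv 0 \pmod{kn}$, i.e. $n \mid 2x+1$, combined with $n \mid x^2 + x + 1$ and $0 \le x < n$, leaves exactly $(n,x) = (1,0)$ and $(n,x) = (3,1)$. (Indeed $4(x^2+x+1) = (2x+1)^2 + 3$, so $n \mid 2x+1$ and $n \mid x^2+x+1$ give $n \mid 3$; checking $n=1$ and $n=3$ gives $x=0$ and $x=1$ respectively.)

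The main obstacle is the step that upgrades ``two of the three $q$ satisfy the symmetry'' to ``the orbit is a single point.'' A priori a double mirror symmetric graph need not obviously have a one point orbit — the symmetry is imposed only at $q_1$ and $q_2$, not at the third index — so one must show that the constraints at two indices already determine the third via the transition equations \1{(1)}--\1{(4)} and force $(K(q_1),M(q_1),S^+(q_1)) = (K(q_2), M(q_2), S^+(q_2))$. I expect this to come down to: from $2S^+(q_1)+K(q_1) \equiv 0 \pmod{M(q_1)}$ one computes (using \1{(3)}) $K(q_1+1) = |S^+(q_1), M(q_1)|$ and the particular $b$ in Theorem~\ref{theo3.2}; the divisibility then makes $S^+(q_1+1)$ and $M(q_1+1)$ proportional to those at $q_1$ in the pattern of Theorem~\ref{theo4.1}, and one shows $q_1+1$ again satisfies the mirror condition, so by induction all three do and the orbit is fixed by the step map, hence a single point. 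Once that reduction is in place, the rest is the elementary Diophantine computation sketched above.
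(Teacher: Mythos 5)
Your translation of the symmetry condition into $2S^+(q)+K(q)\equiv 0 \pmod{M(q)}$, your verification of the ``if'' direction, and your closing Diophantine computation (using $4(x^2+x+1)=(2x+1)^2+3$ to force $n\mid 3$, hence $(n,x)=(1,0)$ or $(3,1)$) are all correct; the last of these is in fact a cleaner finish than the paper's discriminant argument for $x^2+x+(1-2a)x+1-a=0$. But the heart of the ``only if'' direction --- that double mirror symmetry forces the orbit to be a single point --- is exactly the step you leave at the level of ``should pin down'' and ``I expect'', and the mechanisms you sketch do not deliver it. From $d=|S^+(q_1),M(q_1)|$ and $d\mid K(q_1)$ you only get $K(q_1+1)\mid K(q_1)$, a divisibility, not the needed equalities; and the concluding remark ``the orbit is fixed by the step map, hence a single point'' is a non sequitur, since the orbit, as a set, is always invariant under the step map, whether or not it has one element. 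Even the intermediate claim that the mirror condition propagates to the third index is asserted by induction without an argument.

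The paper closes this gap with a concrete computation you do not reproduce: from $S^+(q)+S^-(q)=M(q)$ at $q=1,2$ one has $|S^+(q),M(q)|=|S^-(q),M(q)|$, so Theorem~\ref{theo3.1}(3) together with Corollary~\ref{corollary3.1}(3) gives $K(q+1)=K(q-1)$ at both indices, hence $K(0)=K(1)=K(2)=k$, and then $M(0)=M(1)=M(2)=m$ by Theorem~\ref{theo3.1}(4). Next, writing $a_qm-b_qS^+(q)=k$ with $1\leqslant b_q\leqslant m/k$, Theorem~\ref{theo3.2}(1--2) gives $S^+(q+1)=b_qk-k$ and $S^-(q+1)=b_qk$; imposing the mirror condition at the two indices yields $2b_qk-k=m$ for $q=0,1$, hence $b_0=b_1$, so $S^+(1)=S^+(2)=s$ with $2s+k=m$. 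Since the transition equations determine the $(q+1)$-index-vector from the $q$-index-vector, equality of the vectors at $q=1,2$ forces the vector at $q=0$ to coincide as well, giving the one point orbit $\{(k,2s+k,s)\}$, after which your (or the paper's) Diophantine argument applies. Without some version of this computation your proof is incomplete.
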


\begin{proof}
Let $G \in {\cal P}$ and suppose that $\{(K(q),M(q),S^+ (q)): q\in Q\}$ is the orbit of $G$. First we prove that if
$S^+ (q) + S^- (q) = M(q)$,  for  $q = 1, 2$, then $G$ has a one point orbit of the form $\{(k,2s+k,s)\}$.
If $S^+(q) + S^-(q) = M(q)$, for  $q = 1, 2$, then by Theorem~3.1(3) and Corollary~3.1(3) we conclude that $K(0) = K(1) = K(2)= k$. Hence,
${M(0) = M(1) = M(2)= m}$, by Theorem~3.1(4). Suppose that $a_q$, $b_q$, for $q\in Q$, are integers such that $a_q m - b_qS^+ (q) = k$ and $1 \leqslant b_q \leqslant \afrac{m}{k}$. By Theorem~3.2(1--2), we deduce that  $S^+ (q+1) = b_q k - k$ and ${S^- (q+1) = b_q k}$. Since $S^+ (q+1) + S^- (q+1) = m$ for  $q = 0$, $1$, we see that $b_0 = b_1$, $S^+ (1) = S^+ (2) = s$, and ${s + (s+k) =  m}$. Since $(K(1), M(q), S^+ (1)) = (K(2), M(2), S^+ (2)) = (k,2s+k,s)$, we have $(K(0), M(0), S^+ (0)) = (k,2s+k,s)$. This completes the proof of the implication. The opposite implication follows from Theorem~2.1.

It is easy to see that the following conditions are equivalent (the equivalence
(i)--(ii) follows from Theorem~4.1):
\\[6pt]
(i) $\{ (k,2s+k,s) \}$ is a one point orbit of $G$,
 \\[6pt]
(ii) $m = 2s + k = kn$,  $s = kx$, where  $ 0 \leqslant x < n$  are integers such that $n$ is a divisor of $x^2 + x + 1$,
\\[6pt]
(iii)
$m = k(2x+1)$,  $s = kx$,  where  integers $x  \geqslant 0$  and $a > 0$ are solutions of the equation  $x^2 + x + 1 = a(2x+1)$.
\\[6pt]
Let $D$ be the determinant of the quadratic equation $x^2 + x(1 - 2a) + 1 - a = 0$. Since $D = 4a^2 - 3$
 is a square of an integer, it follows that $a = 1$. Hence, $x = 0$ or $x = 1$, which completes the proof.
 \qed
\end{proof}

\section{Hamilton bonds with the end-trees of the same order}
Let $G$ be a $2$-connected plane triangulation, and suppose that $S$ and $T$ are the end-trees of a Hamilton bond in $G$. Let us denote by $f'_i$ ($f''_i$) the number of vertices of degree $i$ contained in $S$ ($T$, respectively). Tutte \cite{flo11} proved the following identity, which is the dual version of the well-known Grinberg's theorem \cite{flo5}:
\begin{equation}
\sum_{i}(i-2)f'_i=
\sum_{i}(i-2)f''_i.
\label{eq:r1}
\end{equation}
Let us denote by $f_i$ the number of vertices of degree $i$ of the graph $G$. Euler's equation becomes:
\begin{equation}
\sum_{i}(6-i)f_i=12.
\label{eq:r2}
\end{equation}
Recall that $\cal{P}$ ($\cal{H}$) is the family of all $2$-connected plane triangulations all of whose vertices are of degree $3$ or $6$ (at most~$6$, respectively).
\begin{theorem}\label{theo5.1}
If $G\in \cal{P}$, then for every Hamilton bond, the end-trees of the bond have the same number of vertices of degree $6$, and the same number of vertices of degree $3$ in $G$.
\end{theorem}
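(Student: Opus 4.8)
The plan is to read both of the displayed identities through the restriction that every vertex of $G$ has degree $3$ or $6$, and then close the argument with a short parity remark drawn from formula (1).

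First I would pin down $f_3$. Since the only possible degrees are $3$ and $6$, Euler's equation \eqref{eq:r2} becomes $3f_3 = 12$, so $f_3 = 4$; in particular $f_6 = |V(G)| - 4$. Because the two end-trees $S$ and $T$ partition $V(G)$, we have $f'_i + f''_i = f_i$ for every $i$, so $f'_3 + f''_3 = 4$ and $f'_6 + f''_6 = f_6$.

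Next I would substitute the restricted degree set into Tutte's identity \eqref{eq:r1}, which reduces to $f'_3 + 4f'_6 = f''_3 + 4f''_6$, that is, $f'_3 - f''_3 = 4\,(f''_6 - f'_6)$. Hence $4 \mid (f'_3 - f''_3)$, and combined with $f'_3 + f''_3 = 4$ and $f'_3, f''_3 \geqslant 0$ the only possibilities are $(f'_3, f''_3) \in \{(0,4),\,(2,2),\,(4,0)\}$.

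The step I expect to require the most care is ruling out the two lopsided cases $(0,4)$ and $(4,0)$; I would handle them by a parity argument. By formula (1) the order of $G$ equals $2K(q)M(q)+2$, which is even, so $f_6 = |V(G)| - 4$ is even. But in either lopsided case the relation $f'_3 - f''_3 = 4(f''_6 - f'_6)$ forces $|f''_6 - f'_6| = 1$, whence $f_6 = f'_6 + f''_6$ is odd — a contradiction. Therefore $f'_3 = f''_3 = 2$, and then \eqref{eq:r1} gives at once $f'_6 = f''_6$. This establishes that the end-trees contain the same number of degree-$3$ vertices and the same number of degree-$6$ vertices of $G$, as claimed.
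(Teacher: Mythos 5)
Your proposal is correct and follows essentially the same route as the paper: Tutte's identity specialised to degrees $3$ and $6$, the fact that $f'_3+f''_3=4$, and a parity argument on $f_6$ (via the even order $2K(q)M(q)+2$) to exclude the lopsided cases, leaving $f'_3=f''_3=2$ and hence $f'_6=f''_6$.
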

\begin{proof}
Let $S$ and $T$ be the end-trees of a Hamilton bond in $G$.
By equality (\ref{eq:r1}) we have
$4f'_6+f'_3=4f''_6+f''_3$.
Hence,
$f'_3\equiv f''_3 \pmod{4}$.
Because of $f'_3+f''_3=4$
we have two cases:
$f'_3=4$
or
$f'_3=2=f''_3$.
In the first case we have
$4f'_6+4=4f''_6$.
Accordingly, the number $f_6$ is odd. Hence, we have a~contradiction, because  the order of $G$ is even. In the second case we have
$f'_3=2=f''_3$
and we obtain
$f'_6=f''_6$.
\qed
\end{proof}
\begin{theorem}\label{theo5.2}
If $G\in \cal{H}$, then for every Hamilton bond in $G$, the difference in the orders of the end-trees of the bond is not greater then $3$.
\end{theorem}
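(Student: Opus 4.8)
The plan is to run essentially the same argument as in Theorem 5.1, but now the Euler relation for $G \in \mathcal{H}$ reads $\sum_i (6-i)f_i = 12$ with $i$ ranging over $\{1,2,3,4,5,6\}$ (degrees at most $6$), and the Grinberg-type identity $\sum_i (i-2)f_i' = \sum_i (i-2)f_i''$ for the two end-trees $S$ and $T$ still holds. The key quantities to track are the \emph{defects} $d' = \sum_i (6-i)f_i'$ and $d'' = \sum_i (6-i)f_i''$, which are both nonnegative integers summing to $12$, and the \emph{weights} $w' = \sum_i (i-2)f_i'$ and $w'' = \sum_i (i-2)f_i''$, which are equal by \eqref{eq:r1}. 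First I would express the orders $|S| = \sum_i f_i'$ and $|T| = \sum_i f_i''$ in terms of these. Adding the two families of coefficients, note that $(6-i) + (i-2) = 4$, so $d' + w' = 4|S|$ and $d'' + w'' = 4|T|$. Since $w' = w''$, subtracting gives $d' - d'' = 4(|S| - |T|)$, hence $|S| - |T| = \tfrac{1}{4}(d' - d'')$.

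The second step is to bound $|d' - d''|$. Since $d', d'' \geqslant 0$ and $d' + d'' = 12$, we have $d' - d'' \in \{-12, -10, \dots, 10, 12\}$, so a priori $|S|-|T|$ could be as large as $3$ — which is exactly the claimed bound — but I must rule out the case $|d' - d''| = 16$ (impossible, since $d'+d''=12$) and confirm that $4 \mid (d'-d'')$ forces $d' - d'' \in \{0, \pm 4, \pm 8, \pm 12\}$, giving $|S| - |T| \in \{0, \pm 1, \pm 2, \pm 3\}$. The divisibility $4 \mid (d' - d'')$ is automatic from $|S|-|T|$ being an integer, or equivalently can be read off from $d' - d'' = (4|S| - w') - (4|T| - w'') = 4(|S|-|T|)$ together with $w' = w''$. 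So the arithmetic closes immediately: $\bigl||S| - |T|\bigr| = \tfrac14 |d' - d''| \leqslant \tfrac14 \cdot 12 = 3$.

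I expect the only delicate point is to make sure no side condition has been silently used — in Theorem 5.1 the parity/order argument (order of $G$ even) was needed to discard the case $f_3' = 4$, but for Theorem 5.2 no such exclusion is required: the bound $3$ is obtained purely from $d' + d'' = 12$ and $d', d'' \geqslant 0$, with the extremal value $|S|-|T| = \pm 3$ occurring precisely when all twelve units of defect lie in one end-tree (e.g.\ one end-tree containing all the low-degree vertices). Thus the proof is short: establish $4|S| = d' + w'$ and $4|T| = d'' + w''$, subtract using $w' = w''$ to get $|S| - |T| = \tfrac14(d' - d'')$, and conclude $|\,|S|-|T|\,| \leqslant 3$ from $0 \leqslant d', d''$ and $d' + d'' = 12$. \qed
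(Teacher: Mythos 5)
Your proposal is correct and follows essentially the same route as the paper: both combine Tutte's identity (1) with Euler's relation (2) and the nonnegativity of the coefficients $6-i$ to bound the order difference by $\tfrac14\cdot 12=3$. Your packaging via $(6-i)+(i-2)=4$ and $d'+d''=12$ is just a cosmetic rearrangement of the paper's computation, which instead eliminates $f_6$ and estimates $\tfrac14\sum_{i=3}^{5}(6-i)(f''_i-f'_i)$ termwise.
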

\begin{proof}
Let $S$ and $T$ be the end-trees of a Hamilton bond in $G$. By equality (\ref{eq:r1}) and (\ref{eq:r2}) we obtain
\begin{eqnarray*}
\Bigl|
\sum_{i=3}^{6}f''_i-\sum_{i=3}^{6}f'_i
\Bigr|
&=&
\Bigl|
\sum_{i=3}^{5}(f''_i-f'_i)-\sum_{i=3}^{5}\frac{i-2}{4}(f''_i-f'_i)
\Bigr|\\[12pt]
&=&
\Bigl|
\frac14\sum_{i=3}^{5}(6-i)(f''_i-f'_i)
\Bigr|
\leqslant
\frac14\sum_{i=3}^{5}(6-i)f_i=3,
\end{eqnarray*}
which completes the proof.
\qed
\end{proof}

\section{Induced Caterpillars}
Let $\cal{P}$ be the family of all $2$-connected plane triangulations all of whose vertices are of degree~$3$ or~$6$. We recall that a graph $P \in \cal{P}$ is factorable into factors $P_0$, $P_1$, $P_2$ (indexed by elements of the cyclic group $Q = \{0,1,2\}$) satisfying the condition (*). Notice that $P$ has a Hamilton bond if and only if there exists a partitioning of the vertex set of $P$ into two subsets so that each induces a tree.  We show an example that such trees are not always equitable $2$-colorable.
\begin{example}\label{exam6.1}
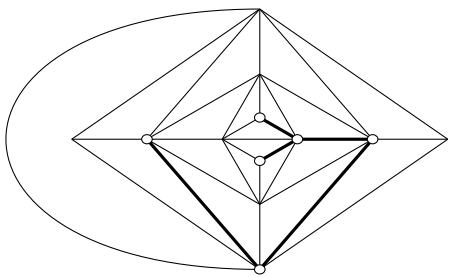
\begin{figure}
\label{Fig4}
\centering
\begin{tikzpicture}[yscale=0.866] 

\foreach \i in {0,...,5} \coordinate (s\i) at (\i,0); 

\coordinate (cg) at (2.5,0.333); 

\coordinate (cg1) at (2.5,1);

\coordinate (cg2) at (2.5,2);

\coordinate (cd) at (2.5,-0.333);

\coordinate (cd1) at (2.5,-1);

\coordinate (cd2) at (2.5,-2);

\draw (s0) -- (s5);
\draw (cg) -- (cg2);
\draw (cg) -- (s2);
\draw (cg) -- (s3);
\draw (cg1) -- (s2);
\draw (cg1) -- (s3);
\draw (cg1) -- (s1);
\draw (cg1) -- (s4);
\draw (cg2) -- (s1);
\draw (cg2) -- (s4);
\draw (cg2) -- (s0);
\draw (cg2) -- (s5);
\draw (cd) -- (cd2);
\draw (cd) -- (s2);
\draw (cd) -- (s3);
\draw (cd1) -- (s2);
\draw (cd1) -- (s3);
\draw (cd1) -- (s1);
\draw (cd1) -- (s4);
\draw (cd2) -- (s1);
\draw (cd2) -- (s4);
\draw (cd2) -- (s0);
\draw (cd2) -- (s5);
\draw (cg2) .. controls (-2,2) and (-2,-2) .. (cd2);
\draw[very thick] (s1) -- (cd2) -- (s4) --  (s3) -- (cg);
\draw[very thick] (s3) -- (cd);
\foreach \i in {cg,cd,s1,s3,s4,cd2} \filldraw[fill=white] (\i) circle (2pt);
\end{tikzpicture}
\caption{An induced tree (in bold) is not equitable $2$-colorable}
\end{figure}
Let $G \in \cal{P}$ be the graph of Figure $4$. Notice that $G$  contains two disjoint and induced trees whose vertices together span all of~$G$. However, the induced trees are not equitable $2$-colorable.
\end{example}
A \textsl{$k$-caterpillar}, $k \geq 1$, is a tree $T$ which contains a path $T_0$ such that $T-V(T_0)$ is a family of independent paths of the same order $k$. The path $T_0$ is referred to as the \textsl{spine} of $T$ (see Chartrand and Lesniak \cite{flo1}). Paths and $k$-caterpillars, for $k$ even, are called \textsl{even caterpillars}. Notice that even caterpillars are equitable $2$-colorable.

Let $H\in \cal{P}$ with $K(q) = 2$, for some $q \in Q$, where $(K(q), M(q), S^+(q))$ is a $q$-index-vector of the graph $H$. Goodey \cite{flo4} constructed a Hamiltonian cycle in every  $2$-connected cubic plane graph whose faces are only triangles or hexagons. In Lemma 6.1 we use a dual version of the Goodey's construction to partition the vertex set of $H$ into two subsets so that each induces an even caterpillar.
\begin{lemma}\label{emmao6.1}
The graph $H$ contains two disjoint and induced even caterpillars $T$ and $S$ \1{(}$T$ is a $(2d-2)$-caterpillar, where $d = |S^+(q) + 1, M(q)|$, and  $S$ is a path\1{)} whose vertices together span all of $H$. Moreover, if $\gamma_1$ is the cycle of class  $q$ in $H$, then
\begin{itemize}
\item [\1{(}\1{1}\1{)}]
$T \cap \gamma_1$ is a family of independent paths in $H$ with the same order $2d-1$, and $S \cap \gamma_1$ is an independent set of vertices.
\end{itemize}
\end{lemma}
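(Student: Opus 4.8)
The plan is to produce the required bond by running the dual of Goodey's Hamilton-circuit construction inside the $q$-drawing of $H$, and then to extract the shape of the two end-trees from the billiard description of that drawing supplied by Theorem~3.1 and Corollary~3.1. Write $M=M(q)$, $s=S^+(q)$ and $d=|s+1,M|$; let $I=[A,q]=v_0v_1\cdots v_M$ be the inner path of class $q$, let $\gamma_1$ be the (unique, since $K(q)=2$) cycle of class $q$, with its $2M$ vertices listed cyclically as $u_0,u_1,\ldots,u_{2M-1}$, and let $O$ be the outer path of class $q$. By $(**)$ and Theorem~3.1(4) we have $V(H)=V(I)\cup V(\gamma_1)\cup V(O)$, a disjoint union with $|V(I)|=|V(O)|=M+1$ and $|V(\gamma_1)|=2M$; moreover $\gamma_1$ has no chords, since every vertex of $\gamma_1$ has degree $6$ split into two edges along $\gamma_1$, two edges towards $I$ and two towards $O$, so every sub-arc of $\gamma_1$ induces a path in $H$.

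The first, and principal, step is to record exactly how (the dual of) Goodey's construction cuts $\gamma_1$. Tracking Goodey's circuit through the two triangulated regions ``$I$ to $\gamma_1$'' and ``$\gamma_1$ to $O$'' with the class labels of Definition~2.1, one should find that the bond meets $\gamma_1$ in a pattern that repeats with period $2d$ around the $2M$-cycle: in each of the $M/d$ periods, $2d-1$ consecutive vertices of $\gamma_1$ are assigned to the $I$-side tree $T$ and the single remaining vertex to the $O$-side tree $S$. The period is $2|s+1,M|=2d$, rather than $2|s,M|$, because the relevant ``return'' around $\gamma_1$ is governed by the positions, in the sense of Definition~2.1, of the branches of $I$ once those branches are passed through the triangles of the inner region, which introduces a unit shift; making this shift precise by feeding the left/right-branch values into Theorem~3.1(1) and Lemma~3.1 is the arithmetical heart of the argument. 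Planarity of the $q$-drawing is used here to guarantee that in $H$ the edges between $V(\gamma_1)$ and each of $T,S$ are exactly the ones forced by this pattern, so that the induced subgraphs below are genuinely trees rather than forests with extra cross-edges.

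Granting the pattern, $T$ and $S$ are read off directly — the $(2d-1)$-arcs of $\gamma_1$ together with the part of $I$ hanging on them form $T$, and the $M/d$ singleton vertices of $\gamma_1$ together with the rest form $S$ — and a direct count gives $|V(T)|+|V(S)|=4M+2$. It then remains to check three things from the explicit description. First, that $H[V(\gamma_1)\cap T]$ is a disjoint union of $M/d$ induced paths on $2d-1$ vertices and $H[V(\gamma_1)\cap S]$ is an independent set; this uses the no-chord property of $\gamma_1$ and the fact that consecutive singletons are $2d\ge 2$ apart on the cycle, and it is precisely claim~(1). Second, that $H[T]$ is an induced tree admitting a spine $T_0$ whose deletion leaves $M/d$ independent paths on $2d-2$ vertices, so that $T$ is a $(2d-2)$-caterpillar and hence, $2d-2$ being even, an even caterpillar. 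Third, that $H[S]$ is an induced path, obtained from the path $O$ by inserting the $M/d$ singleton $\gamma_1$-vertices, each of which is adjacent to two consecutive vertices of $O$. The second and third points are exactly where planarity must be invoked to exclude stray edges between the three layers. (When $d=1$ both $T$ and $S$ come out as paths; this is the case relevant to Theorem~\ref{theo6.2}.)

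The step I expect to be the real obstacle is the structural claim of the second paragraph: pinning down Goodey's construction in the coordinates of the $q$-drawing precisely enough to see that the resulting bond meets $\gamma_1$ in exactly the alternating pattern of $(2d-1)$-arcs and singletons, with $d=|s+1,M|$, and that planarity leaves no further edges between the layers inside $T$ or $S$. Once that is established, the verification in the third paragraph — that one end-tree is a path and the other a $(2d-2)$-caterpillar and that claim~(1) holds — is routine bookkeeping with the no-chord property and the degree-$6$ structure of the vertices of $\gamma_1$.
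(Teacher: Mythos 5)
Your outline defers precisely the step that constitutes the proof. You say that, tracking the dual of Goodey's construction through the $q$-drawing, ``one should find'' that the bond meets $\gamma_1$ in a period-$2d$ pattern with $d=|S^+(q)+1,M(q)|$, and you yourself flag this as ``the real obstacle''; no mechanism is given for the decisive shift from $S^+(q)$ to $S^+(q)+1$. The paper does not track Goodey's circuit inside $H$ at all: it deletes the inner path $\gamma=v_0v_1\ldots v_{M(q)}$ and folds the cycle $\gamma_1=t_0t_1\ldots t_{2M(q)-1}$ onto itself by identifying $t_0t_1\ldots t_{M(q)}$ with $t_0t_{2M(q)-1}\ldots t_{M(q)}$, obtaining a graph $H_\gamma\in\mathcal{P}$ with $q$-index-vector $(1,M(q),S^+(q))$, in which $S^-_\gamma(q)=S^+(q)+1$. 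Corollary 3.1 and Lemma 3.1(1), applied to the class-$(q-1)$ path $[w_0,q-1]$ of $H_\gamma$, then supply exactly the arithmetic you are missing: the contacts of that path with the folded path $\delta$ occur at the positions $0,2d,4d,\ldots,2M(q)-2d$. Lifting $[w_0,q-1]$ back to $H$ gives the spine $T_0$ (a path running through some vertices of $\gamma$, some of $\gamma_1$ and some of the outer path $\gamma'$), and attaching to each of its $\gamma_1$-vertices the next $2d-2$ cycle vertices yields the $(2d-2)$-caterpillar $T$; the complementary vertex set induces the path $S$. (The boundary case $S^+(q)=M(q)-1$ is handled separately and directly.)

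Moreover, the concrete shape you do commit to for the two trees is wrong. You put the whole inner path into $T$ and describe $S$ as the outer path with the $M(q)/d$ singleton $\gamma_1$-vertices ``inserted'', each adjacent to two consecutive outer-path vertices. Since the outer path is induced in $H$, the edge joining those two consecutive vertices also lies in $H[S]$, so every inserted vertex creates a triangle and $H[S]$ cannot be an induced path. Independently, your split gives $|V(T)|=3M(q)+1-M(q)/d$ and $|V(S)|=M(q)+1+M(q)/d$, which differ whenever $d>1$; this contradicts Theorem 5.1, because the cut between two induced trees partitioning $V(H)$ is a Hamilton bond and its end-trees must have equal order. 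In the paper's construction both trees contain vertices of both boundary paths (the spine of $T$ is the lift of $[w_0,q-1]$, alternating between part of $\gamma$, vertices of $\gamma_1$ and part of $\gamma'$), which is exactly what balances the orders and keeps $S$ acyclic; so the error is not cosmetic but forces the different architecture the paper uses.
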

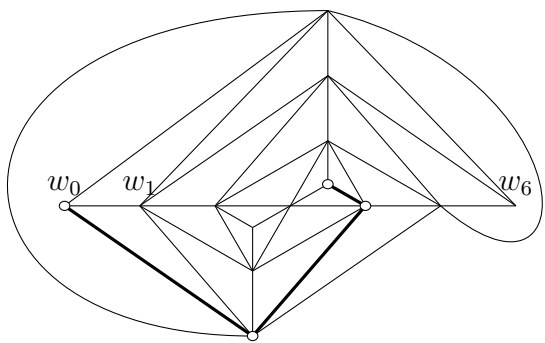
\begin{figure}
\label{Fig5}
\centering
\begin{tikzpicture}[yscale=0.866] 

\foreach \i in {0,...,6} \coordinate (s\i) at (\i,0); 

\coordinate (cg) at (3.5,0.333); 

\coordinate (cg1) at (3.5,1);

\coordinate (cg2) at (3.5,2);

\coordinate (cg3) at (3.5,3);

\coordinate (cd) at (2.5,-0.333);

\coordinate (cd1) at (2.5,-1);

\coordinate (cd2) at (2.5,-2);

\draw (s0) -- (s6);

\draw (cg) -- (cg3);
\draw (cg) -- (s3);
\draw (cg) -- (s4);
\draw (cg1) -- (s3);
\draw (cg1) -- (s4);
\draw (cg1) -- (s2);
\draw (cg1) -- (s5);
\draw (cg2) -- (s2);
\draw (cg2) -- (s5);
\draw (cg2) -- (s1);
\draw (cg2) -- (s6);
\draw (cg3) -- (s1);
\draw (cg3) -- (s6);
\draw (cg3) -- (s0);

\draw (cd) -- (cd2);
\draw (cd) -- (s2);
\draw (cd) -- (s3);
\draw (cd1) -- (s2);
\draw (cd1) -- (s3);
\draw (cd1) -- (s1);
\draw (cd1) -- (s4);
\draw (cd2) -- (s1);
\draw (cd2) -- (s4);
\draw (cd2) -- (s0);
\draw (cd2) -- (s5);

\draw (cg3) .. controls (-2,3) and (-2,-2) .. (cd2);

\draw (cg3) .. controls (7,2) and (7,-2) .. (s5);

\draw[very thick] (s0) -- (cd2) -- (s4) -- (cg);

\foreach \i in {cg,s0,s4,cd2} \filldraw[fill=white] (\i) circle (2pt);

\draw (s0) node[anchor=south]{$w_0$};
\draw (s1) node[anchor=south]{$w_1$};
\draw (s6) node[anchor=south]{$w_6$};

\end{tikzpicture}
\caption{A $[w_0, q-1]$ path (in bold) in the graph $H_\gamma$}
\end{figure}
\begin{proof}
Let $\gamma = v_0v_1 \ldots v_{M(q)}$ and $\gamma'$ be two maximal paths of class $q$, and suppose that $\gamma_1 = t_0t_1 \ldots t_{2M(q)-1}$ is the clock-wise oriented cycle of class $q$ in $H$. Without loss of generality we can assume that vertices $t_0$, $t_1$ are adjacent to $v_1$ (see Fig.~6). If $S^+(q) = M(q)-1$, then there exists a vertex $u \neq v_0$ of degree $3$ which is adjacent to $t_{2M(q)-1}$ and $t_0$. Then the set
$W = \{u, v_0, t_0, t_1, \ldots, t_{2M(q)-2}\}$ ($V(H) - W$) induces a $(2M(q)-2)$-caterpillar $T$ (a path $S$, respectively) satisfying condition (1).

Let $S^+(q) < M(q)-1$. In the graph $H - V(\gamma)$ we identify successive vertices and edges of the path $t_0t_1 \ldots t_{M(q)}$   with successive vertices and edges of the path $t_0t_{2M(q)-1} t_{2M(q)-2} \ldots t_{M(q)}$. After the identification we obtain a path $\delta = w_0w_1 \ldots w_{M(q)}$ and a graph $H_{\gamma} \in \cal{P}$ (see Fig.~5). Notice that $\delta$ and $\gamma'$ are two maximal paths of the same class (say $q$) in  $H_{\gamma}$. Hence,  $(K_{\gamma}(q), M_{\gamma}(q), S^+_{\gamma}(q)) = (1, M(q), S^+ (q))$ is the $q$-index-vector of the graph $H_\gamma$. Let $e_1, e_2, \ldots, e_n$ be a sequence of all consecutive edges of the path $[w_0, q-1]$ which are adjacent to the path $\delta$ (see Fig.~5). Since $S^-_{\gamma}(q) = S^+_{\gamma}(q)+1 = S^+(q) +1 < M(q)$, we have $n > 1$. By Lemma 3.1(1) and Corollary 3.1(2), we obtain
\begin{itemize}
\item [(2)]
$\{[w_0,q](e_1),[w_0,q](e_2),\ldots, [w_0,q](e_n)\} = \{0, 2d, 4d, \ldots, 2M(q)-2d\}$,
\end{itemize}
where $d = |S^+(q) + 1, M(q)|$. Let $I = \{0 \leqslant i \leqslant  M(q): w_i \in  V([w_0, q-1])\}$.  We can consider $V_0 = V([w_0, q-1])\cap V(\gamma')$ as a set of vertices in $H$. It is not difficult to see that the following set
$$V_1 = V_0 \cup \bigcup_{i \in I} \{v_{i}\} \cup \bigcup_{i \in I} \{t_{i}\}\cup \bigcup_{i \in I \backslash \{0, M(q)\}} \{t_{2M(q)-i}\}$$
induces a path $T_0$ in $H$ (see Fig.~6). Accordingly, by (2), the following set
\begin{align*}
V_2 = V_1 & \cup \bigcup _{i \in I} \{t_{i+1}, t_{i+2}, \ldots, t_{i+2d-2}\}
\cr & \cup \bigcup _{i \in I \backslash \{0, M(q)\}}\{t_{2M(q)-i+1}, t_{2M(q)-i+2},\ldots, t_{2M(q)-i+2d-2}\}
\end{align*}
induces a $(2d-2)$-caterpillar $T$ in $H$ with the spine $T_0$ (see Fig.~6). Notice that $V(H)-V_2$ induces a path $S$ in $H$, and condition (1) is satisfied.
\qed
\end{proof}

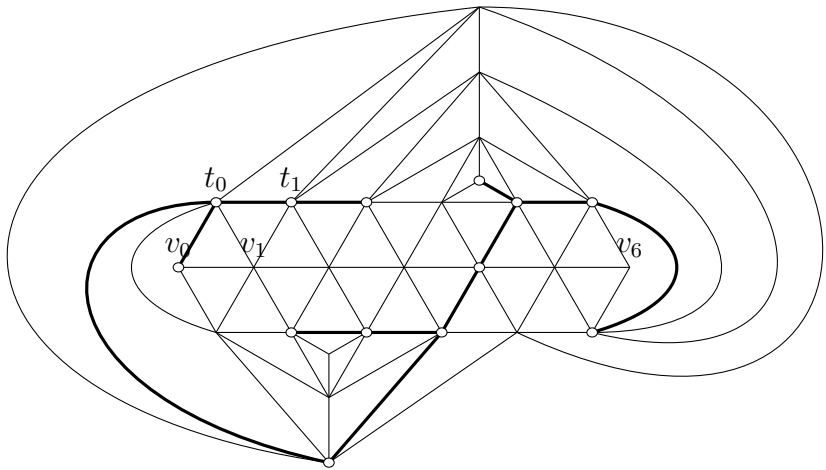
\begin{figure}
\label{Fig6}
\centering
\begin{tikzpicture}[yscale=0.866] 

\foreach \i in {0,...,6} \coordinate (s\i) at (\i,0); 

\foreach \j in {0,...,5} \coordinate (g\j) at (\j+0.5,1); 

\foreach \k in {0,...,5} \coordinate (d\k) at (\k+0.5,-1); 

\coordinate (cg) at (4,1.333); 

\coordinate (cg1) at (4,2);

\coordinate (cg2) at (4,3);

\coordinate (cg3) at (4,4);

\coordinate (cd) at (2,-1.333);

\coordinate (cd1) at (2,-2);

\coordinate (cd2) at (2,-3);

\coordinate (cl) at (-1,0);
\coordinate (cp) at (7,0); 

\draw (s0) -- (s6);
\draw (g0) -- (g5);
\draw (d0) -- (d5);

\draw (cg) -- (cg3);
\draw (cg) -- (g3);
\draw (cg) -- (g4);
\draw (cg1) -- (g3);
\draw (cg1) -- (g4);
\draw (cg1) -- (g2);
\draw (cg1) -- (g5);
\draw (cg2) -- (g2);
\draw (cg2) -- (g5);
\draw (cg2) -- (g1);
\draw (cg3) -- (g1);
\draw (cg3) -- (g0);

\draw (cd) -- (cd2);
\draw (cd) -- (d1);
\draw (cd) -- (d2);
\draw (cd1) -- (d1);
\draw (cd1) -- (d2);
\draw (cd1) -- (d0);
\draw (cd1) -- (d3);
\draw (cd2) -- (d0);
\draw (cd2) -- (d3);
\draw (cd2) -- (d4);

\draw (s0) -- (g0);
\draw (d0) -- (g1);
\draw (d1) -- (g2);
\draw (d2) -- (g3);
\draw (d3) -- (g4);
\draw (d4) -- (g5);
\draw (d5) -- (s6);

\draw (s0) -- (d0);
\draw (g0) -- (d1);
\draw (g1) -- (d2);
\draw (g2) -- (d3);
\draw (g3) -- (d4);
\draw (g4) -- (d5);
\draw (g5) -- (s6);

\coordinate (tl) at (-0.5,0); 
\coordinate (ll) at (-4,0);

\draw (g0) .. controls (-1,0.5) and (-1,-0.5) .. (d0);
\draw (g5) .. controls (7,0.5) and (7,-0.5) .. (d5);

\draw (g0) .. controls (-2,1) and (-2,-2) .. (cd2);
\draw (cg2) .. controls (8,1) and (8,-1) .. (d5);

\draw (cg3) .. controls (-4,3) and (-4,-2) .. (cd2);

\draw (cg3) .. controls (9,2) and (9,-2) .. (d5);

\draw (cg3) .. controls (10,4) and (10,-4) .. (d4);

\draw[very thick] (s0) -- (g0) -- (g2);
\draw[very thick]  (g0) .. controls (-2,1) and (-2,-2) .. (cd2);
\draw[very thick] (cd2) -- (d3);
\draw[very thick] (d3) -- (d1);
\draw[very thick] (d3) -- (g4);
\draw[very thick] (g4) -- (cg);
\draw[very thick] (g4) -- (g5);
\draw[very thick]  (g5) .. controls (7,0.5) and (7,-0.5) .. (d5);

\foreach \i in {cg,g0,g1,g2,g4,g5,
s0,s4,d1,d2,d3,d5,cd2} \filldraw[fill=white] (\i) circle (2pt);

\draw (s0) node[anchor=south]{$v_0$};
\draw (s1) node[anchor=south]{$v_1$};
\draw (g0) node[anchor=south]{$t_0$};
\draw (g1) node[anchor=south]{$t_1$};
\draw (s6) node[anchor=south]{$v_6$};

\end{tikzpicture}
\caption{An even caterpillar $T$ (in bold) in the graph $H$}
\end{figure}
In Theorem 6.1 we prove that if $P \in \cal{P}$ has the order $2n$ and $n$ is odd, then there is possible to partition the vertex set of $P$ into two subsets so that each induces an even caterpillar. Hence, by Theorem 5.1, the two even caterpillars are equitable $2$-colorable and have the same order.
In Theorem~6.2 we prove that if $P$ has an index-vector $(K(q), M(q), S^+(q))$ such that $M(q)$ is odd and $K(q) \geqslant \frac{M(q)}{3}$, then there is a partitioning of the vertex set of $P$ into two subsets so that each induces a path.
\begin{theorem}\label{theo6.1}
Let $P \in \cal{P}$. If $P$ has the order $2n$ and $n$ is odd, then $P$ contains two disjoint and induced even caterpillars which vertices together span all of~$P$.
\end{theorem}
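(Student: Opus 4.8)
The plan is to reduce to the case $K(q)=2$ settled by Lemma~\ref{emmao6.1}, after first using the orbit arithmetic to locate a class $q_0$ with $K(q_0)$ even. Since $2K(q)M(q)+2=2n$, we have $K(q)M(q)=n-1$, which is even for every $q\in Q$; so for each $q$ at least one of $K(q),M(q)$ is even.

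Next I would show that some $K(q_0)$ is even. Suppose not: $K(0),K(1),K(2)$ are all odd, hence, $K(q)M(q)$ being even, every $M(q)$ is even. By Theorem~\ref{theo3.1}(3) the number $K(q+1)=|S^+(q),M(q)|$ divides $M(q)$; as $K(q+1)$ is odd and $M(q)$ even, $S^+(q)$ must be odd (in particular $S^+(q)\neq0$) for every $q$, and by Corollary~\ref{corollary3.1}(3), applied in the same way, $S^-(q)$ is odd for every $q$. Now Theorem~\ref{theo2.1} gives $M(q)\mid S^-(q)-S^+(q)-K(q)$; but $S^-(q)-S^+(q)$ is even and $K(q)$ is odd, so $S^-(q)-S^+(q)-K(q)$ is a nonzero odd integer, which cannot be divisible by the even number $M(q)$ --- a contradiction. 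Hence we may fix $q_0\in Q$ with $K(q_0)$ even, so $K(q_0)\geqslant2$.

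It remains to build the two even caterpillars inside the $q_0$-drawing of $P$, which by (**) consists of an inner maximal path $\gamma$ of class $q_0$, the cycles $\gamma_1,\dots,\gamma_{K(q_0)-1}$ of class $q_0$, and the outer maximal path $\gamma'$ of class $q_0$. For $K(q_0)=2$ this is precisely Lemma~\ref{emmao6.1}. For $K(q_0)=2\ell$ with $\ell\geqslant2$ I would argue by induction on $\ell$: delete the inner path $\gamma$ and the outer path $\gamma'$ and fold each of the two extreme cycles $\gamma_1$ and $\gamma_{K(q_0)-1}$ onto a path, exactly as $\gamma_1$ is folded in the passage from $H$ to $H_\gamma$ in the proof of Lemma~\ref{emmao6.1}; this produces a graph in $\cal P$ of class $q_0$, with $M(q_0)$ unchanged and $K$ reduced by $2$ (hence still even and at least $2$), to which the inductive hypothesis applies. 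Lifting the two caterpillars of the reduced graph back through the two deleted caps --- unfolding along $\gamma_1$ and along $\gamma_{K(q_0)-1}$ and inserting the local pattern of Lemma~\ref{emmao6.1} (legs of the common even length $2d-2$ with the isolated vertices of the path part in between) --- returns disjoint induced even caterpillars $T$ and $S$ spanning $P$; condition~(1) of Lemma~\ref{emmao6.1} is exactly what guarantees that the caps attach compatibly, so that $T$ stays a path or a $2k$-caterpillar rather than a mere induced tree and $S$ stays a path. The main obstacle is precisely this gluing/unfolding step together with the bookkeeping of the nested patterns. Finally, Theorem~\ref{theo5.1} shows the two even caterpillars have the same order, and, being even caterpillars, they are equitable $2$-colorable, which yields the consequences stated after the theorem.
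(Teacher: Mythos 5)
Your parity step is sound and is essentially the paper's own argument: from $K(q)M(q)=n-1$ even, the assumption that all $K(q)$ are odd forces all $M(q)$ even, then Theorem~\ref{theo3.1}(3) and Corollary~\ref{corollary3.1}(3) force $S^{+}(q),S^{-}(q)$ odd, and Theorem~\ref{theo2.1} gives the contradiction; so some $K(q_0)$ is even. The gap is in the inductive step. Your induction hypothesis is just the statement of the theorem, and that is too weak to support the ``unfolding'' at the two caps. After you delete $\gamma_0,\gamma'$ and fold $\gamma_1,\gamma_{K(q_0)-1}$ into the maximal paths $\delta_1,\delta_{K(q_0)-1}$ of the reduced graph, you need to know \emph{where and how} the two caterpillars of the reduced graph meet $\delta_1$ and $\delta_{K(q_0)-1}$ (which vertices are spine vertices, what the spacing is, which caterpillar carries the path pieces and which the isolated vertices) in order to decide how each identified vertex splits into the two vertices of $\gamma_1$ and how the vertices of $\gamma_0$ are distributed. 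Condition~(1) of Lemma~\ref{emmao6.1}, which you invoke as the compatibility guarantee, describes the trace on the \emph{cycle} of a graph with $K=2$; it says nothing about the traces on the maximal paths $\delta_1,\delta_{K(q_0)-1}$ where your gluing happens, and for a reduced graph with $K-2>2$ no such statement is available at all. Moreover, the cap pattern you insert has legs of length $2d-2$ with $d$ coming from Lemma~\ref{emmao6.1} applied locally, while the caterpillars supplied by the induction hypothesis have legs of some length determined by a different (deeper) reduction; a caterpillar must have all legs of the same order, so without an argument that these lengths coincide (and that the union stays induced and acyclic) the glued object need not be an even caterpillar. You yourself flag this gluing as ``the main obstacle,'' but it is precisely the content of the proof, not a routine verification.

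The paper closes exactly this hole by strengthening the induction hypothesis and choosing a different reduction. It carries along the invariant (its condition~(3)): on the cycles $\gamma_j$ the traces of $T_k$ and $S_k$ alternate with $j$ between families of independent paths of one common odd order and independent vertex sets. Its reduction is in the middle, not at the ends: delete the cycle $\gamma_{k-2}$ and identify $\gamma_{k-3}$ with $\gamma_{k-1}$, which drops $K$ by $2$ with one seam instead of two; the lifting is then the explicit assignment $V_T\cup\{x_i:i\in I\}\cup\{z_i:i\in I\}\cup\{y_i:i\in J\}$ (and its complement), which visibly reproduces the invariant on the two restored cycles and on the reinserted middle cycle, so the common leg length is fixed once, in the base case of Lemma~\ref{emmao6.1}, and never has to be re-matched. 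If you want to salvage your end-stripping variant, you would have to formulate and prove an analogous invariant for the traces on the two maximal paths (independent vertices at spacing $2d$ for one caterpillar, paths of order $2d-1$ for the other, with the same $d$ at both ends and at every level), which is essentially the paper's condition~(3) in disguise; as written, the proposal does not prove the theorem.
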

\begin{proof}
Let $P \in \cal{P}$ has the order $2n$, and suppose that $n$ is odd. Let $(K(q), M(q), S^+(q))$ be the $q$-index-vector of $P$, $q\in Q$. First we prove that $K(q)$ is even for some $q \in Q$. We know that  $2K(q)M(q)+2 = 2n$ for every $q\in Q$. Hence, if $K(q)$ is odd, then $M(q)$ is even, because  $K(q)M(q) = n-1$ is even. By Theorem 2.1, $S^-(q) -S^+(q)  \equiv K(q) \pmod {M(q)}$, whence $S^+(q)$ or $S^-(q)$ is even. By Theorem 3.1(3) and Corollary 3.1(3), $K(q\pm1) = |S^\pm (q), M(q)|$, whence  $K(q+1)$ or $K(q-1)$ is even.

Let now $K(q) = k$ be even, and suppose that $\gamma_0$, $\gamma'$ are maximal paths of class $q$, and  $\gamma_1 ,\gamma_2,\ldots,\gamma_{k-1}$  are clock-wise oriented cycles of class $q$ such that vertices of $\gamma_j$ are adjacent to vertices of $\gamma_{j-1}$, $1 \leqslant j < k$. We will prove that $P$ contains two disjoint and induced even caterpillars $T_k$ and $S_k$ which vertices together span all of $P$, and the following condition is satisfied
\begin{itemize}
\item [(3)]
$\{T_k \cap \gamma_{j}:  j \hbox{ odd}, 1 \leqslant j < k\} \cup \{S_k \cap \gamma_{j}:  j \hbox{ even}, 1 \leqslant j < k\}$
is a family of independent paths in $P$ with the same odd order, and  $\{T_k \cap \gamma_{j}:  j \hbox{ even}, 1 \leqslant j < k\} \cup \{S_k \cap \gamma_{j}:  j \hbox{ odd}, 1 \leqslant j < k\}$ is an independent set of vertices in $P$.
\end{itemize}
\begin{figure}
\label{Fig7}
\hspace*{-18mm}
\begin{tikzpicture}[yscale=0.866] 

\begin{scope}
\foreach \i in {0,...,6} \coordinate (s\i) at (\i,0); 

\foreach \j in {0,...,5} \coordinate (g\j) at (\j+0.5,1); 

\foreach \k in {0,...,5} \coordinate (d\k) at (\k+0.5,-1); 

\foreach \jj in {1,...,5} \coordinate (gg\jj) at (\jj,2); 

\foreach \kk in {1,...,5} \coordinate (dd\kk) at (\kk,-2); 

\foreach \jjj in {0,...,5} \coordinate (ggg\jjj) at (\jjj+0.5,3); 

\foreach \kkk in {0,...,5} \coordinate (ddd\kkk) at (\kkk+0.5,-3); 

\coordinate (cg) at (4,3.333); 

\coordinate (cg1) at (4,4);

\coordinate (cg2) at (4,5);

\coordinate (cg3) at (4,6);

\coordinate (cd) at (2,-3.333);

\coordinate (cd1) at (2,-4);

\coordinate (cd2) at (2,-5);

\coordinate (cl) at (-1,0);
\coordinate (cp) at (7,0); 

\draw (s0) -- (s6);
\draw (g0) -- (g5);
\draw (d0) -- (d5);
\draw (gg1) -- (gg5);
\draw (dd1) -- (dd5);
\draw (ggg0) -- (ggg5);
\draw (ddd0) -- (ddd5);

\draw (cg) -- (cg3);
\draw (cg) -- (ggg3);
\draw (cg) -- (ggg4);
\draw (cg1) -- (ggg3);
\draw (cg1) -- (ggg4);
\draw (cg1) -- (ggg2);
\draw (cg1) -- (ggg5);
\draw (cg2) -- (ggg2);
\draw (cg2) -- (ggg5);
\draw (cg2) -- (ggg1);
\draw (cg3) -- (ggg1);
\draw (cg3) -- (ggg0);

\draw (cd) -- (cd2);
\draw (cd) -- (ddd1);
\draw (cd) -- (ddd2);
\draw (cd1) -- (ddd1);
\draw (cd1) -- (ddd2);
\draw (cd1) -- (ddd0);
\draw (cd1) -- (ddd3);
\draw (cd2) -- (ddd0);
\draw (cd2) -- (ddd3);
\draw (cd2) -- (ddd4);

\draw (s0) -- (ggg1);
\draw (d0) -- (ggg2);
\draw (ddd0) -- (ggg3);
\draw (ddd1) -- (ggg4);
\draw (ddd2) -- (ggg5);
\draw (ddd3) -- (g5);
\draw (ddd4) -- (s6);

\draw (s0) -- (ddd1);
\draw (g0) -- (ddd2);
\draw (ggg0) -- (ddd3);
\draw (ggg1) -- (ddd4);
\draw (ggg2) -- (ddd5);
\draw (ggg3) -- (d5);
\draw (ggg4) -- (s6);

\coordinate (tl) at (-0.5,0); 
\coordinate (ll) at (-4,0);

\draw (g0) .. controls (-1,0.5) and (-1,-0.5) .. (d0);
\draw (g5) .. controls (7,0.5) and (7,-0.5) .. (d5);

\draw (g0) to[bend right] (cl);
\draw (cl) to[bend right] (d0);
\draw (g5) to[bend left] (cp);
\draw (cp) to[bend left] (d5);

\draw (gg1) to[bend right] (cl);
\draw (cl) to[bend right] (dd1);
\draw (gg5) to[bend left] (cp);
\draw (cp) to[bend left] (dd5);

\draw (ggg0) to[bend right] (cl);
\draw (cl) to[bend right] (ddd0);
\draw (ggg5) to[bend left] (cp);
\draw (cp) to[bend left] (ddd5);

\draw (ggg0) .. controls (-3,2) and (-3,-2) .. (ddd0);
\draw (ggg5) .. controls (9,2) and (9,-2) .. (ddd5);

\draw (ggg0) .. controls (-4,2) and (-4,-2) .. (cd2);

\draw (cg2) .. controls (10,3) and (10,-3) .. (ddd5);

\draw (cg3) .. controls (-5,6) and (-5,-5) .. (cd2);

\draw (cg3) .. controls (11,4) and (11,-3) .. (ddd5);
\draw (cg3) .. controls (12,5) and (12,-5) .. (ddd4);
\draw[very thick] (s0) -- (g0) -- (g2);
\draw[very thick] (g0) to[bend right] (cl);
\draw[very thick]  (ggg0) to[bend right] (cl);
\draw[very thick]  (ggg0) -- (ggg2);
\draw[very thick] (ggg0) .. controls (-4,2) and (-4,-2) .. (cd2);
\draw[very thick] (cd2) -- (ddd3);
\draw[very thick] (ddd3) -- (ddd1);
\draw[very thick] (ddd3) -- (dd4);
\draw[very thick] (dd4) -- (d3);
\draw[very thick] (d3) -- (d1);
\draw[very thick] (d3) -- (g4);
\draw[very thick] (g4) -- (g5);
\draw[very thick] (g5) .. controls (7,0.5) and (7,-0.5) .. (d5);
\draw[very thick] (g4) -- (gg4);
\draw[very thick] (gg4) -- (ggg4);
\draw[very thick] (ggg4) -- (cg);
\draw[very thick] (ggg4) -- (ggg5);
\draw[very thick] (ggg5) .. controls (9,2) and (9,-2) .. (ddd5);

\foreach \i in {cg,ggg0,ggg1,ggg2,ggg4,ggg5,gg4,g0,g1,g2,g4,g5,
cl,s0,s4,d1,d2,d3,d5,dd4,ddd1,ddd2,ddd3,ddd5,cd2} \filldraw[fill=white] (\i) circle (2pt);

\draw (s0) node[anchor=south]{$v_0$};
\draw (s1) node[anchor=south]{$v_1$};
\draw (g0) node[anchor=south]{$x_0$};
\draw (g1) node[anchor=south]{$x_1$};
\draw (s6) node[anchor=south]{$v_6$};
\draw (gg1) node[anchor=south]{$y_0$};
\draw (gg2) node[anchor=south]{$y_1$};
\draw (ggg1) node[anchor=south]{$z_1$};
\draw (ggg0) node[anchor=south]{$z_0$};

\end{scope}
\end{tikzpicture}
\caption{An even caterpillar $T$ (in bold) in the graph $P$}
\end{figure}
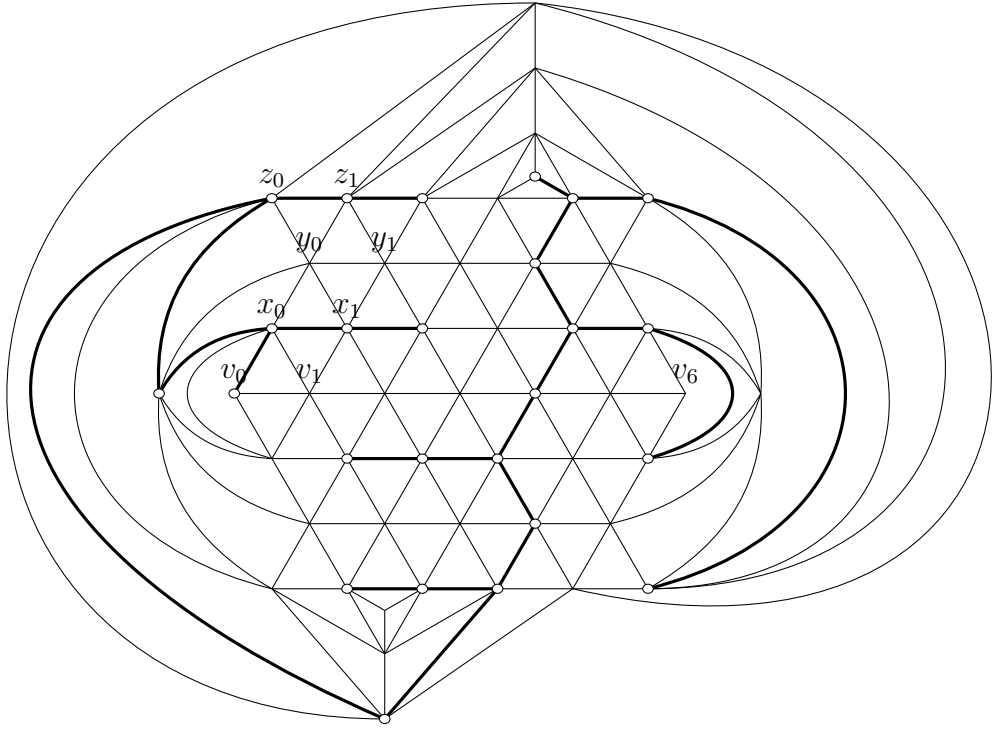
We proceed by induction on the even number $K(q) = k$. By Lemma 6.1, we can assume that $k \geqslant 4$. Let $\gamma_{k-3} = x_0x_1 \ldots x_{M(q)-1}$, $\gamma_{k-2} = y_0y_1\ldots y_{2M(q)-1}$, $\gamma_{k-1} = z_0z_1 \ldots z_{2M(q)-1}$. Without loss of generality we can assume that $y_0$, $y_1$ are adjacent to $x_1$, and  $z_0$, $z_1$ are adjacent to $y_0$ (see Fig.~7). In the graph $P- V(\gamma_{k-2})$ we identify successive vertices and edges of the cycle $\gamma_{k-1}$ with successive vertices and edges of the cycle $\gamma_{k-3}$. After the identification we obtain a cycle $\delta = t_0t_1\cdots t_{2M(q)-1}$ and a graph $H \in \cal{P}$ (see Fig.~6). Notice that $\gamma_0$, $\gamma'$ (or~$\gamma_j$ for $1 \leqslant j \leqslant k-3$) are maximal paths (or cycles, respectively) of the same class (say $q$) in $H$. By induction $H$ contains two disjoint and induced even caterpillars $T_{k-2}$ and $S_{k-2}$ which vertices together span all of $H$, and condition $(3)$ holds (for $k$ replaced with $k-2$, and $P$ replaced with $H$). Let $I = \{0 \leqslant i < 2M(q): {t_i \in V(T_{k-2})}\}$ and $J = \{0 \leqslant i < 2M(q): {t_i \in V(S_{k-2})}\}$. We can consider $V_T  = V(T_{k-2})\backslash V(\delta)$ and $V_S  = V(S_{k-2})\backslash V(\delta)$ as sets of vertices in the graph $P$. Hence, the following sets
$$
V_T \cup \{x_{i} : i \in I\} \cup \{z_{i} : i \in I\} \cup \{y_{i} : i \in J\},
$$
$$
V_S \cup \{x_{i} : i \in J\} \cup \{z_{i} : i \in J\} \cup \{y_{i} : i \in I\}
$$
induce (respectively) two disjoint even-caterpillars $T_k$ and $S_k$ which vertices together span all of $P$, and condition $(3)$ holds.
\qed
\end{proof}
\begin{lemma}\label{lemma6.2}
If $m \geqslant 3$, $0 \leqslant a < b \leqslant m$ are integers, and $b-a \geqslant \frac{m}{3} - 1$, then interval $[a, b]$ contains an integer $2^k$ or $m-2^k$ for some integer $k$.
\end{lemma}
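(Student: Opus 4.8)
\section*{Proof proposal for Lemma~\ref{lemma6.2}}

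The plan is to argue by contradiction: assume that $[a,b]$ contains neither a power of two nor a number of the form $m-2^{k}$, and deduce $b-a<\frac{m}{3}-1$. Several degenerate situations are dealt with at once. If $a=0$ then $1=2^{0}\in[a,b]$ since $b\geqslant 1$, and if $a\in\{1,2\}$ then $a$ is itself a power of two. Dually, since $a\leqslant b-1\leqslant m-1$ we have $m-a\geqslant 1$, and each of $b=m$, $m-a=1$, $m-a=2$ forces some $m-2^{k}$ into $[a,b]$. So I may assume $a\geqslant 3$, $m-a\geqslant 3$ and $m-b\geqslant 1$.

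The key observation is that the two avoidance conditions localize the interval between consecutive powers of two. Because $[a,b]$ contains no power of two and $a\geqslant 3$, there is an integer $k$ with $2^{k}<a\leqslant b<2^{k+1}$, whence $b-a\leqslant 2^{k}-2$. Since ``$[a,b]$ contains no $m-2^{k}$'' is literally ``$[m-b,m-a]$ contains no power of two,'' the same reasoning produces an integer $j$ with $2^{j}<m-b\leqslant m-a<2^{j+1}$ and $b-a\leqslant 2^{j}-2$; adding the relevant endpoint inequalities gives $2^{k}+2^{j}<m<2^{k+1}+2^{j+1}$.

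Now I feed in the hypothesis $b-a\geqslant\frac{m}{3}-1$. From $\frac{m}{3}-1\leqslant 2^{k}-2$ we get $m<3\cdot 2^{k}$, and symmetrically $m<3\cdot 2^{j}$; combined with $m>2^{k}+2^{j}$ this forces $2^{j}<2^{k+1}$ and $2^{k}<2^{j+1}$, hence $j=k$ and therefore $2^{k+1}<m<2^{k+2}$. I then sharpen the endpoint bounds to $a\geqslant\max\{2^{k}+1,\,m-2^{k+1}+1\}$ and $b\leqslant\min\{2^{k+1}-1,\,m-2^{k}-1\}$ and split on the sign of $m-3\cdot 2^{k}$: the active bounds give $b-a\leqslant m-2^{k+1}-2$ when $m<3\cdot 2^{k}$, $b-a\leqslant 2^{k+2}-m-2$ when $m>3\cdot 2^{k}$, and $b-a\leqslant 2^{k}-2$ when $m=3\cdot 2^{k}$. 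In each case a one-line estimate contradicts $b-a\geqslant\frac{m}{3}-1$, the equality case $m=3\cdot 2^{k}$ (where $\frac{m}{3}-1=2^{k}-1>2^{k}-2$) showing that the constant $\frac{1}{3}$ cannot be improved.

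The bookkeeping in the final three-way split is routine. The step I expect to be the real obstacle is recognizing that neither avoidance condition alone suffices, and that one must play the localization $2^{k}<a\leqslant b<2^{k+1}$ of $[a,b]$ against the localization $2^{j}<m-b\leqslant m-a<2^{j+1}$ of its reflection through $m$ in order to pin down $j=k$ and confine $m$ to the single dyadic block $(2^{k+1},2^{k+2})$ --- once that is achieved, the value $\frac{1}{3}$ is precisely what makes all three closing inequalities go through.
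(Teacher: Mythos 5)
Your proof is correct, but it takes a genuinely different route from the paper's. The paper gives a direct covering argument: it fixes the single exponent $k$ with $2^k \leqslant \frac{m}{3} < 2^{k+1}$ and observes that the admissible values $1,\,2^k,\,2^{k+1},\,m-2^{k+1},\,m-2^k,\,m-1$ cut $[0,m]$ into gaps of size at most $\frac{m}{3}$, so no integer interval of length at least $\frac{m}{3}-1$ can avoid them all --- three lines, no contradiction, no case analysis. You instead argue by contraposition, sandwiching $[a,b]$ between consecutive powers of two, doing the same for its reflection $[m-b,m-a]$, forcing the two exponents to coincide, and closing with sharpened endpoint bounds and a split on the sign of $m-3\cdot 2^k$; this is longer, but it isolates the extremal configuration $m=3\cdot 2^k$ and makes the sharpness of the constant $\frac13$ visible, which the paper's proof does not. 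Two bookkeeping remarks: (i) the cases $m>3\cdot 2^k$ and $m=3\cdot 2^k$ of your final split are vacuous, since you have already derived $m<3\cdot 2^k$ from $\frac{m}{3}-1\leqslant b-a\leqslant 2^k-2$; (ii) your degenerate cases dispose of $m-a\in\{1,2\}$ and record only $m-b\geqslant 1$, whereas the dual localization needs a power of two strictly below $m-b$ --- this is not a real gap, because $m-b\in\{1,2\}$ would make $b$ itself equal to $m-2^0$ or $m-2^1$, contradicting your avoidance assumption, but the line should be stated explicitly.
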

\begin{proof}
Let $k$ be integer such that $2^k \leqslant  \frac{m}{3} < 2^{k+1}$. Since $2^{k+1} - 2^{k} \leqslant \frac{m}{3}$ and $(m-2^{k+1}) - 2^{k+1} < \frac{m}{3}$, the interval $[a,b]$ of the length at least $\frac{m}{3}- 1$ contains one of the integers: $$1, 2^k, 2^{k+1}, m-2^{k+1}, m-2^k, m-1,$$
which completes the proof.
\qed
\end{proof}
\begin{theorem}\label{theo6.2}
Let $P\in \cal{P}$. If $M(q)$ is odd and $K(q) \geqslant \frac{M(q)}{3}$ for some $q \in Q$, then $P$ has two disjoint and induced paths which together span $P$.
\end{theorem}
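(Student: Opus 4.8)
The plan is to reduce the statement to Lemma~6.1, using Lemma~6.2 to make the right choice in the reduction. First I would dispose of the ``trivial'' classes: if $\gcd(S^+(q),M(q))=1$ then, by Theorem~3.1, the factor $P_{q+1}$ contains no cycle, so it is the disjoint union of its two maximal paths of class $q+1$; by~(1) and Theorem~3.1(4) these are vertex-disjoint induced paths of $P$ with $K(q)M(q)+1$ vertices each, hence together they span $P$. Symmetrically, $\gcd(S^-(q),M(q))=1$ settles the claim via the two maximal paths of class $q-1$. So from now on I may assume $M(q)\geqslant3$ and $\gcd(S^+(q),M(q)),\gcd(S^-(q),M(q))>1$, and a short check on the hypothesis $K(q)\geqslant\frac{M(q)}{3}$ then gives $K(q)\geqslant3$.

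Next, as in the proofs of Lemma~6.1 and Theorem~6.1, I would collapse cycles of class $q$ one at a time (folding a cycle, or absorbing a cycle between its two neighbours), staying inside ${\cal P}$ and keeping $M(q)$ fixed, until reaching a graph $H\in{\cal P}$ with $q$-index-vector $(2,M(q),c)$. By Theorem~2.1 the twist not retained by $H$ can be distributed between the inner and the outer side of the drawing in any proportion, so $c$ may be prescribed to equal any value in a block of about $K(q)$ consecutive integers of $[0,M(q))$. Moreover it is enough to arrange $\gcd(c,M(q))=1$ or $\gcd(c+1,M(q))=1$: in the first case the two maximal paths of class $q+1$ of $H$ span $H$ exactly as above; in the second case $d:=\gcd(c+1,M(q))=1$, so the $(2d-2)$-caterpillar produced by Lemma~6.1 degenerates to a path, and Lemma~6.1 supplies two disjoint induced paths of $H$ spanning $H$.

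The set of values I am allowed to hit is therefore (essentially) a block of $\approx K(q)$ consecutive integers, which by $K(q)\geqslant\frac{M(q)}{3}$ has length at least $\frac{M(q)}{3}-1$; by Lemma~6.2 with $m=M(q)$ it contains $2^{k}$ or $M(q)-2^{k}$ for some integer $k$, and since $M(q)$ is odd both of these are coprime to $M(q)$. Hence a suitable collapse exists and $H$ has two disjoint induced paths that together span $H$. It then remains to lift this pair back through the chain of un-collapses to two disjoint induced paths of $P$ that together span $P$.

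The lifting is the main obstacle. The un-collapse of Theorem~6.1 is only shown to send an even caterpillar to an even caterpillar, so here one must sharpen its bookkeeping --- the analogue of condition~(3) in the proof of Theorem~6.1 --- to guarantee that, when a collapsed cycle is re-expanded and each of its vertices is split into two, no vertex acquires degree $3$ in the lifted subgraphs, i.e.\ that paths lift to paths rather than merely to caterpillars, and this invariant must be carried along the whole chain. The second point requiring care is the counting in the second paragraph, namely that the collapse really realizes a full block of the required length, since that is exactly where the hypotheses $K(q)\geqslant\frac{M(q)}{3}$ and ``$M(q)$ odd'' enter, through Lemma~6.2.
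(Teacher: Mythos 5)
Your overall strategy is the right one --- use Lemma~6.2 (with $M(q)$ odd, so $2^k$ and $M(q)-2^k$ are automatically coprime to $M(q)$) to find a residue $s$ coprime to $M(q)$ in a window of length about $K(q)\geqslant\frac{M(q)}{3}$, then modify the graph so that this $s$ becomes the new $S^+$ and invoke Theorem~3.1(3) to make the class-$(q+1)$ factor consist of two spanning paths. But as written the argument has two genuine gaps, both of which you flag yourself and neither of which you close: (a) the claim that an iterated collapse down to index-vector $(2,M(q),c)$ lets you ``prescribe $c$ to equal any value in a block of about $K(q)$ consecutive integers'' is asserted via Theorem~2.1 but never proved, and it is exactly the place where the hypothesis $K(q)\geqslant\frac{M(q)}{3}$ must be cashed in; (b) the lifting of the two spanning paths of $H$ back through a whole chain of un-collapses, with an invariant guaranteeing that paths lift to paths rather than to caterpillars, is acknowledged as ``the main obstacle'' and left open. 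A proof that ends with its two decisive steps declared to require ``sharpened bookkeeping'' is not yet a proof.

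The paper avoids both difficulties by performing a \emph{single} surgery rather than a chain of them, and by not stopping at $K=2$. With $s$ chosen by Lemma~6.2 in $[\max(S^+(q),1),\,\min(S^+(q)+K(q),M(q)))$, put $l=s-S^+(q)$; delete the maximal path $\gamma_0$ and the cycles $\gamma_1,\dots,\gamma_{l-1}$, and fold the cycle $\gamma_l$ onto itself (identify $v^l_i$ with $v^l_{2M(q)-i}$). The resulting $H\in{\cal P}$ has $q$-index-vector $(K(q)-l,\,M(q),\,s)$: the new $S^+$ is computed \emph{exactly}, not approximately, because the class-$(q+1)$ segment $v^0_{S^+(q)}v^1_{S^+(q)+1}\cdots v^l_s\cdots$ advances one position per layer, so removing $l$ layers shifts $S^+$ by precisely $l$. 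Then $K_H(q+1)=|s,M(q)|=1$, so the class-$(q+1)$ factor of $H$ is already two disjoint induced spanning paths $\alpha,\beta$ --- Lemma~6.1 and the $\gcd(c+1,M(q))$ alternative are not needed. The lift is then automatic rather than an invariant to be carried along: each folded vertex $w_i$ of the fold-line corresponds in $P$ to the induced class-$(q-1)$ segment $v^l_i\cdots v^1_iv^0_iv^1_{2M(q)-i}\cdots v^l_{2M(q)-i}$ (or $v^l_i\cdots v^0_i$ for $i\in\{0,M(q)\}$) running through all deleted layers, so replacing each $w_i\in V(\alpha)\cup V(\beta)$ by its segment turns $\alpha$ and $\beta$ into two disjoint induced paths spanning $P$ in one step --- vertices are expanded into paths, so no vertex can acquire degree $3$. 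If you want to salvage your version, the cleanest fix is precisely to abandon the one-cycle-at-a-time reduction and redo it as this single delete-and-fold operation.
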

\begin{proof}
Let $(K(q), M(q), S^+(q))$ be a $q$-index-vector of $P$ such that $M(q)$ is odd and $K(q) \geqslant   \frac{M(q)}{3}$. If $M(q) = 1$ ($K(q) = 1$), then the union of two maximal paths of class $q+1$ ($q$, respectively) is a spanning subgraph of $P$. Hence we assume that $M(q) \geqslant 3$ and $K(q) \geqslant 2$. By Lemma 5.2, there exists integer $s$ such that $\max (S^+(q),1) \leqslant s < \min (S^+(q) + K(q), M(q))$ and $|s, M(q)| = 1$. If $S^+(q) = s$, then, by Theorem 3.1(3), $K(q+1) = 1$ and our theorem holds.

Let $l = s - S^+(q) >  0$. Assume that $\gamma_0 = v^0_0v^0_1 \ldots v^0_{M(q)}$, $\gamma'$ are maximal paths of class $q$, and $\gamma_k = v^k_0v^k_1 \ldots v^k_{2M(q)-1}$, $1 \leqslant k < K(q)$, are clock-wise oriented cycles of class $q$ in $P$. Without loss of generality we can assume that vertices
$v^k_1$, $v^k_2$ are adjacent to $v^{k-1}_1$, $1 \leqslant k < K(q)$.
In $P - \bigcup_{0\leqslant k < l}V(\gamma_k)$ we identify successive vertices and edges of the path $v^l_0v^l_1 \ldots v^l_{M(q)}$ with successive vertices and edges of the path $v^l_0 v^l_{2M(q)-1} v^l_{2M(q)-2} \ldots v^l_{M(q)}$.
After the identification we obtain a path $\delta = w_0w_1 \ldots w_{M(q)}$ and a graph $H\in\cal{P}$. Notice that $\delta$ and $\gamma'$ ($\gamma_k$, for $l < k < K(q)$) are two maximal paths ($K(q)-l-1$ cycles, respectively) of the same class (say $q$) in $H$. Hence, $(K_H(q), M_H(q), S_{H}^+(q)) = (K(q) - l, M(q), S_{H}^+(q))$ is the $q$-index-vector of $H$. Let us consider the segment  $v^0_{S^+(q)}v^1_{S^+(q)+1}\cdots v^l_sv^{l+1}_{s+1} \cdots v^{K(q)-1}_{S^+(q)+K(q)-1}v$ of class $q+1$ in $P$, where the vertex $v$ belongs to $\gamma'$. By the definition of $S^+(q)$, $deg_P(v) = 3$. Thus, the segment $w_sv^{l+1}_{s+1}\cdots v^{K(q)-1}_{S^+(q)+K(q)-1}v$ is  of class $q+1$ in $H$ and $deg_H(v) = 3$, whence $S_{H}^+(q) = s$. Therefore, by Theorem 3.1(3),  $K_{H}(q+1)= |s,M(q)|= 1$. Hence, $H$ has two maximal paths $\alpha$ and $\beta$ of class $q+1$ whose vertices together span all of $H$. Let $I = \{0 \leqslant i \leqslant  M(q): w_i \in V(\alpha)\}$ and $J = \{0 \leqslant i \leqslant  M(q): {w_i \in V(\beta)}\}$. We can consider $V_\alpha = V(\alpha)\backslash V(\delta)$ and $V_\beta = V(\beta)\backslash V(\delta)$ as sets of vertices in the graph $P$. Notice that $v_i^lv_i^{l-1} \cdots v_i^0$, $0 \leqslant i \leqslant M(q)$, $v_i^0v_{2M(q)-i}^1  \cdots v_{2M(q)-i}^l$, $0 < i < M(q)$, are segments of class $q-1$ in~$P$. Since a vertex $w_i$ of the path $\delta$ is obtained by the identification of the vertices $v_i^l$ and $v_{2M(q)-i}^l$ in $P$, the following sets
$$
V_{\alpha} \cup \bigcup _{i \in I\cap \{0, M(q)\}} \{v_i^l, v_i^{l-1}, \ldots, v_i^0\}
  \cup \bigcup _{i \in I\backslash \{0, M(q)\}} \{v_i^l \cdots v_i^1v_i^0v_{2M(q)-i}^1  \cdots v_{2M(q)-i}^l\},
$$
and
$$
V_{\beta} \cup \bigcup _{i \in J\cap \{0, M(q)\}} \{v_i^l, v_i^{l-1}, \ldots, v_i^0\}
  \cup \bigcup _{i \in J\backslash \{0, M(q)\}} \{v_i^l \cdots v_i^1v_i^0v_{2M(q)-i}^1  \cdots v_{2M(q)-i}^l\}
$$
induce paths in $P$  whose vertices together span all of $P$.
\qed
\end{proof}

\section{Orbits of non simple graphs in $\cal{P}$}
In the following theorem we characterize orbits of plane triangulations in $\cal{P}$ which are not simple.
\begin{theorem}\label{theo7.1}
$G\in\cal{P}$ is not simple  if and only if $G$ has the orbit of the form
$$
\{(n,1,0), (1,n, n-1), (1,n, 0)\}, \ \hbox{for some integer} \ n > 1 .
$$
\end{theorem}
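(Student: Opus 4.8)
The plan is to prove the two implications separately, using the structure theorem (**) for the necessity and the arithmetic equations (1)--(4) to pin down the orbit. The implication from right to left is immediate: if $G\in\cal{P}$ has an orbit $\{(n,1,0),(1,n,n-1),(1,n,0)\}$ with $n>1$, then, taking $q\in Q$ with $q$-index-vector $(n,1,0)$, condition (**) exhibits in the $q$-drawing of $G$ exactly $K(q)-1=n-1\geqslant1$ disjoint cycles of class $q$ of length $2M(q)=2$; a cycle of length two is a pair of parallel edges, so $G$ is not simple.

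For the converse I would first argue that $G$ has no loop (a loop would be an edge of some factor $P_{q}$, hence an edge of one of the two maximal paths of class $q$ or of one of the cycles of class $q$, none of which carry loops), so that non-simplicity of $G$ provides two parallel edges $e_{1},e_{2}$ joining a pair $u,v$ of vertices. Letting $q_{1}$ be the class of $e_{1}$ and passing to the $q_{1}$-drawing, the edge $e_{1}$ lies in a single layer $L$ of $P_{q_{1}}$ --- one of the two maximal paths of class $q_{1}$ or one of its $K(q_{1})-1$ cycles --- and, crucially, $L$ is an induced subgraph of $G$. The key step is then: if $M(q_{1})\geqslant2$, then $L$ is a vertex-simple path of length $\geqslant2$ or cycle of length $\geqslant4$, so that $u,v\in V(L)$ and, $L$ being induced, both $e_{1}$ and $e_{2}$ must lie in $E(L)$ --- impossible, since $L$ has only one edge joining $u$ and $v$. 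Hence $M(q_{1})=1$, which forces $S^{+}(q_{1})=0$; and $K(q_{1})\geqslant2$, since $K(q_{1})=1$ would make the order $2K(q_{1})M(q_{1})+2$ equal to $4$, i.e. $G=K^{4}$, which is simple. Thus $G$ has $q_{1}$-index-vector $(n,1,0)$ for some integer $n>1$.

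To finish I would compute the whole orbit from $(n,1,0)$, in the style of Example~3.1: by (3) and Theorem~3.1(4) one obtains $K(q_{1}+1)=|0,1|=1$ and $M(q_{1}+1)=n$, and by Theorem~3.2 (with $a=b=1$) one obtains $S^{+}(q_{1}+1)=n-1$; a further step (again $a=b=1$, now solving $an-b(n-1)=1$) yields the $(q_{1}+2)$-index-vector $(1,n,0)$, and a third step returns to $(n,1,0)$, so the orbit of $G$ is $\{(n,1,0),(1,n,n-1),(1,n,0)\}$. I expect the main obstacle to be this geometric step --- that parallel edges cannot be distributed between two distinct factors, equivalently that the induced layer carrying $e_{1}$ already carries $e_{2}$ whenever $M(q_{1})\geqslant2$; the surrounding arithmetic is a routine unwinding of (1)--(4).
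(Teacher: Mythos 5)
Your proposal is correct and follows essentially the same route as the paper: the paper also reduces non-simplicity to the existence of a $q$-index-vector $(n,1,0)$ with $n>1$ (via the equivalent statements ``$G$ has a $2$-cycle of class $q$'' and ``$G\neq K_4$ and has two class-$q$ edges with ends of degree $3$,'' which it declares easy and which your inducedness argument for the layer carrying $e_1$ makes explicit), and then obtains the orbit $\{(n,1,0),(1,n,n-1),(1,n,0)\}$ by exactly the same appeal to Theorem~3.1(3--4) and Theorem~3.2(2). Your arithmetic unwinding matches the paper's, so no gap remains.
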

\begin{proof}
Let $G \in \cal P$. It is easy to prove that the following conditions are equivalent (the last equivalence follows from Theorem~3.1(3--4) and 3.2(2)):
\\[6pt]
(i) $G$ is not simple,
\\[6pt]
(ii) $G$ has a cycle of class $q$ with the length $2$, for some $q \in Q$,
\\[6pt]
(iii) $G \neq K_4$ and it has two edges of class $q$ with ends of degree $3$,  for some $q \in Q$,
\\[6pt]
(iv) $G$ has an  index-vector of the form $(n,1,0)$, for some $n > 1$,
\\[6pt]
(v) $G$ has an orbit of the form $\{(n,1,0), (1,n, n-1), (1,n, 0)\}$, for some $n > 1$.
\\[6pt]
This completes the proof.
\qed
\end{proof}


\begin{thebibliography}{99}

\bibitem{flo1} G. Chartrand and L. Lesniak, Graphs and Digraphs, Chapman and Hall, New York, 2005.

\bibitem{flo2} R. Diestel, Graph Theory, Springer-Verlag, Berlin Heidelberg, 2005.

\bibitem{flo3} J. Florek, Billiard and Diophantine Approximation, Acta Arirthmetica 134(4), (2008), pp. 317--327.

\bibitem{flo4} P. R. Goodey, A class of Hamiltonian polytopes, J. Graph Theory 1, (1977), pp. 181--185.

\bibitem{flo5} E. J. Grinberg, Plane homogenous graphs of degree three without Hamiltonian circits, In Latvian Math. Yearbook 4 (Russian), pp. 51--58, Izdat. ``Zinatne", 1968.

\bibitem{flo6} B. Gr\"{u}nbaum and T.S. Motzkin, The number of hexagons and the
 simplicity of geodesics on certain polyhedra, Canad. J. Math. 15, (1963), pp. 744--751.



\bibitem{flo7} W. Meyer, Equitable coloring, Amer. Math. Monthly 80, (1973), 920--922.

\bibitem{flo8} A. Schinzel and W. Sierpi\'{n}ski, Colloquium Mathematicum 4, (1956), pp. 71--73.

\bibitem{flo9} W. M. Schmidt, Diophantine Approximation, Springer-Verlag, New York, 1980.

\bibitem{flo10} S. K. Stein, $B$-sets and planar maps, Pacific. J. Math. 37, (1971), pp. 217--224.

\bibitem{flo11} W. T. Tutte, Graph Theory, Encyclopedia of Mathematics and its Applications 21, G. Addison-Wesley Publishing Company, 1984.


\end{thebibliography}
\end{document}